\newcommand{\Sc}{{\mbox{\rm Sc}}}
\def\<{\langle}
\def\>{\rangle}
\def\eps{\varepsilon}
\def\NN{\mathbb{N}}
\def\ZZ{\mathbb{Z}}
\def\RR{\mathbb{R}}
\def\CC{\mathbb{C}}
\def\calf{\mathcal{F}}
\def\tr{\operatorname{Tr\,}}
\def\id{\operatorname{id\,}}
\def\Div{\operatorname{div}}
\def\vol{\operatorname{vol}}
\def\eq{\hspace*{-1.5mm}&=&\hspace*{-1.5mm}}
\def\dt{\partial_t}
\newtheorem{corollary}{Corollary}
\newtheorem{definition}{Definition}
\newtheorem{example}{Example}
\newtheorem{remark}{Remark}
\newtheorem{lemma}{Lemma}
\newtheorem{proposition}{Proposition}
\newtheorem{theorem}{Theorem}
\author{Vladimir Rovenski
\thanks{
        E-mail: rovenski@math.haifa.ac.il.
        Supported by the Marie-Curie actions grant EU-FP7-P-2010-RG, No.~276919.}
        \ and \
        Leonid Zelenko\thanks{
        E-mail: zelenko@math.haifa.ac.il}\\
        {\small Mathematical Department, University of Haifa}
}
\title{The mixed scalar curvature flow on a fiber bundle}
\begin{document}

\date{}

\maketitle


\begin{abstract}
We apply conformal flows of metrics restricted to the orthogonal distribution $D$ of a foliation
to study the question:
Which foliations admit a metric such that the leaves are totally geodesic and
the mixed scalar curvature is positive?
 Our evolution operator includes the integrability tensor of $D$,
and for the case of integrable orthogonal distribution the flow velocity is proportional to the mixed scalar curvature.
We~observe that the mean curvature vector $H$ of $D$ satisfies along the leaves
the forced Burgers equation,
this reduces to the linear Schr\"{o}dinger equation, whose potential
function is a certain ``non-umbilicity" measure of $D$.
 On order to show convergence of the solution metrics $g_t$ as $t\to\infty$,
we normalize the flow, and instead of a foliation consider a~fiber bundle $\pi: M\to B$ of a
Riemannian manifold $(M, g_0)$.
In this case, if the ``non-umbilicity" of $D$ is smaller in a sense then the ``non-integrability",
then the limit mixed scalar curvature function is positive.
For integrable $D$, we give examples with foliated surfaces and twisted products.

\vskip1mm\noindent
\textbf{Keywords}:
Riemannian metric;
foliation;
fiber bundle;
totally geodesic; conformal; mixed scalar curvature; Burgers equation; Schr\"{o}dinger operator;
twisted product

\vskip1mm\noindent
\textbf{Mathematics Subject Classifications (2010)} Primary 53C12; Secondary 53C44
\end{abstract}

\section{Introduction}
\label{sec:intro}

\textbf{1.1. Totally geodesic foliations}.
Let $M^{n+p}$ be a connected manifold, endowed with a $p$-dimensional foliation $\calf$, i.e., a partition of $M$ into $p$-dimensional submanifolds.
A foliation $\calf$ on a Riemanian manifold $(M, g)$ is \textit{totally geodesic} if the leaves (of $\calf$) are totally geodesic submanifolds.
A~Riemannian metric $g$ on $(M,\calf)$ is called \textit{totally geodesic} if $\calf$ is totally geodesic w.\,r.\,t.~$g$.

The~simple examples of totally geodesic foliations are parallel circles or winding lines on a flat torus, and a Hopf field of great circles on the sphere $S^3$.
Totally geodesic foliations appear naturally as null-distributions (or kernels) in the study of manifolds with degenerate curvature-like tensors and  differential forms.
Totally geodesic foliations of codimension-one on closed non-negatively curved space forms are completely understood:
they are given by parallel hyperplanes in the case of a flat torus $T^n$
and they do not exist for spheres $S^n$. However, if the codimension is greater than
one, examples of geometrically distinct totally geodesic foliations are abundant, see survey in \cite{rov-m}.

One of the principal problems of geometry of foliations reads as follows:
 \textit{Given a foliation $\calf$ on a manifold $M$ and a geometric property (P), does there exist a Riemannian metric $g$ on $M$ such that $\calf$ enjoys (P) with respect to $g$?}
Such problems (first posed by H.\,Gluck
for geodesic foliations) were studied already in the 1970's when D.\,Sullivan
provided a topological condition (called {\it topological tautness}) for a foliation, equivalent to
the existence of a Riemannian metric making all the leaves minimal.
 In~recent decades, several tools providing results of this sort have been developed. Among them, one may find Sullivan's
{\it foliated cycles} and new \textit{integral formulae}, \cite[Part 1]{rw-m},
\cite{wa1} etc.

\vskip1mm
\textbf{1.2. The mixed scalar curvature}.
There are three kinds of curvature for a foliation: 
tangential, transversal, and mixed (a plane that contains a tangent vector to the foliation and a vector orthogonal to it is said to be mixed).
 The geometrical sense of the mixed curvature follows from the fact that for a totally geodesic
foliation, certain components of the curvature tensor
(i.e., contained in the Riccati equation for the conullity tensor, see Section~\ref{subsec:prel}),
regulate the deviation of the leaf geodesics.
In general relativity, the \textit{geodesic deviation equation} is an equation involving the Riemann curvature tensor, which measures the change in separation of neighboring geodesics or, equivalently, the tidal force experienced by a rigid body moving along a geodesic.
In the language of mechanics it measures the rate of relative acceleration of two particles moving forward on neighboring geodesics.
Let $\{e_i,\,\eps_\alpha\}_{i\le n,\,\alpha\le p}$ be a local orthonormal frame on $TM$ adapted to $D$ and $D_\calf$.
 Tracing the Riccati equation yields the equation with
the \textit{mixed scalar curvature} that is the following function on $M$, see \cite{rov-m, rw-m, wa1}:
\[
 \Sc_{\,\rm mix} =\sum\nolimits_{i=1}^n\sum\nolimits_{\alpha=1}^p K(e_i, \eps_\alpha),
\]
where
$K(e_i, \eps_\alpha)$ is the sectional curvature of the mixed plane spanned by the vectors $e_i$ and $\eps_\alpha$.
For~example, $\Sc_{\,\rm mix}$ of a foliated surface $(M^2,g)$ is the gaussian curvature $K$.

Denote $(\,\cdot\,)^\calf$ and $(\,\cdot\,)^\bot$ projections onto $D_\calf$ and $D$, respectively.
The second fundamental tensor $b$ and the integrability tensor $T$ of $D$~are given by
\begin{eqnarray*}
 2\,b(X,Y) = (\nabla_X Y+\nabla_Y X)^\calf,\qquad
 2\,T(X,Y)=[X,\,Y]^\calf\qquad (X,Y\in D),
\end{eqnarray*}
where $\nabla$ is the Levi-Civita connection of~$g$.
For general (i.e., non-integrable) distribution $D$, define the domain $U_T=\{x\in M: T(x)\ne0\}$.
The~mean curvature vector of $D$ is given by $H=\tr_g b$.

For a totally geodesic foliation $\calf$ we have, see \cite{ran} and \cite{wa1}:
\begin{equation}\label{eq-ran}
 {\rm Sc}_{\rm mix} =\Div H +|H|^2+\|T\|^2 -\|b\,\|^2.
\end{equation}
By the Divergence Theorem, (\ref{eq-ran}) yields the integral formula with ${\rm Sc}_{\rm mix}$ on a closed manifold $M$.

A~foliation $\calf$ is \textit{conformal}, \textit{transversely harmonic}, or \textit{Riemannian},
if $b=\frac1nH\hat g$, $H=0$ or $b=0$, respectively.
In these cases, the distribution $D$ is called \textit{totally umbilical}, \textit{harmonic} or \textit{totally geodesic}, respectively.
Conformal foliations were introduced by Vaisman \cite{v79} as foliations
admitting a transversal conformal structure.
Molino developed a theory of Riemannian foliations on compact manifolds,
such foliations form a subclass of conformal foliations.

\begin{remark}\rm
Formula (\ref{eq-ran}) gives us decomposition criteria for foliated manifolds
(with an integrable orthogonal distribution) under the constraints on the sign of $\Sc_{\,\rm mix}$,
see \cite{wa1} and a survey in \cite{rov-m}:

(1) If $\calf$ and $\calf^\perp$ are complementary orthogonal totally umbilical and totally geodesic foliations on a closed
oriented Riemannian manifold $M$ with ${\rm Sc}_{\rm mix}\ge 0$, then $M$ splits along the foliations.

(2) A compact minimal foliation $\calf$ on a Riemannian manifold $M$ with an integrable orthogonal
distribution and $\Sc_{\,\rm mix} \ge 0$ splits  along the foliations.

(3) A minimal foliation $\calf$ on a Riemannian manifold $M$ with the integrable
orthogonal distribution and $\Sc_{\,\rm mix} > 0$ has no compact leaves.
\end{remark}

The basic question that we want to address in the paper is the following.

\noindent
\textbf{Question 1}: {Which foliations admit a totally geodesic metric of positive mixed scalar curvature}?

\begin{example}\label{R-03}\rm
(a) A change of initial metric along orthogonal distribution $D$ preserves the property
``$\calf$ is totally geodesic", see Lemma~\ref{L-nablaNN} in Section~\ref{subsec:tvarb}.
Let $\pi: M\to B$ be a fiber bundle with compact fibers.
One may deform the metric $g$ along $D$ to obtain a bundle-like totally geodesic metric~$\tilde g$
(which in general is not $D$-conformal to $g$) on a fiber or its neighborhood.
If there is a section $\xi:B\to M$ transversal to fibers,
then the deformation can be done globally,
and $\pi$ becomes a Riemannian submersion with totally geodesic fibers.
In this case, the mixed sectional curvature is non-negative (due to the formula $K(X,V)|X|^2|V|^2=|A_X V|^2$ for the mixed sectional curvature by O'Neill),
moreover, $\Sc_{\,\rm mix}$ with respect to $\tilde g$ is positive on
$U_T$ ($\Sc_{\,\rm mix}\equiv0$ when $D$ is integrable).

(b) For any $n\ge2$ and $p\ge1$ there exists a fiber bundle with
a closed $(n+p)$-dimensional total space and a compact $p$-dimensional fiber,
having a totally geodesic metric of positive mixed scalar curvature.
 To show this, consider the Hopf fibration $\tilde\pi:S^{3}\to S^2$ of a unit sphere $S^{3}$ by great circles (closed geodesics). Let $(\tilde F, g_1)$ and $(\tilde B, g_2)$ be closed Riemannian manifolds
with dimensions, respectively, $(p-1)$ and $(n-2)$.
 Let $M=\tilde F\times S^{3}\times\tilde B$ be the metric product, and $B=S^{2}\times\tilde B$.
Then $\pi:M\to B$ is a fibration with a totally geodesic fiber $F=\tilde F\times S^1$.
Certainly, $\Sc_{\,\rm mix}=2>0$.
\end{example}

Motivating by Remark~\ref{R-03}, we ask the following (more particular than \textbf{Question 1}).

\noindent
\textbf{Question 2}:\textit{
Given a Riemannian manifold $(M,g)$ with a totally geodesic foliation $\calf$,
does there exist a $D$-conformal to $g$ metric $\bar g$ on $M$ such that $\overline{\rm Sc}_{\,\rm mix}$ is positive?}

In the paper (at least in main results) we impose the additional restrictions:

 -- instead of a foliation, $M$ is a total space of a smooth fiber bundle $\pi: M\to B$,

 -- the fibers (leaves) are compact.

\noindent
Although a fiber bundle is locally a product (of the base and the fiber), this is not true globally.
Meanwhile, in Section~\ref{sec:main-res} we have example of solutions in the class of twisted products.

\textbf{1.3. $D$-conformal flows of metrics}.
We attack the \textbf{Question 2} using evolution PDEs.
Evolution equations are important tool to study physical and natural phenomena.
The~prototype for non-linear advection-diffusion processes is the \textit{Burgers equation}
 $v_{,t}+(v^2)_{,x} =\nu\,v_{,xx}$ for~a scalar function $v$
(a constant $\nu>0$ is the kinematic viscosity).
It serves as the simplest model equation for solitary waves, and is used for describing wave processes
in gas and fluid dynamics~\cite{s2008}.

A~\textit{geometric flow} (GF) of metrics on a manifold $M$ is a solution $g_t$ of an evolution equation  \begin{equation*}
 \dt g = S(g)\,,
\end{equation*}
where a geometric functional $S$ (a symmetric $(0,2)$-tensor) is usually related to some kind of curvature.
The theory of GFs is a new subject, of common interest in mathematics and physics.
GFs (e.g., the \textit{Ricci flow} and the \textit{Yamabe flow}),
correspond to dynamical systems in the infinite-dimensional space of all possible Riemannian metrics on a manifold.

The~notion of the $D$-\emph{truncated} $(r,k)$-\emph{tensor} $\hat S$ ($r=0,1$) will be helpful:
\[
 \hat S(X_1,\dots,X_k) = S(X_1^\bot,\dots,X_k^\bot)\qquad (X_i\in TM).
\]

Rovenski and Walczak \cite{rw-m} introduced $D$-truncated flows of metrics on codimensi\-on-one foliations, 
depending on the
extrinsic geometry of the leaves and posed the question:

 \textit{Given a foliation $(M,\calf)$ and a geometric property (P),
does there exist a $D$-truncated tensor $S$ such that solution metrics $g_t\ (t\ge0)$ to
$\dt g = \hat S(g)$
converge to a metric $g_\infty$ for which $\mathcal{F}$ enjoys~(P)}?

Some of results in \cite{rw-m}
were extended by the first author \cite{rw4} for GFs related to parabolic PDEs,
applications to the problem of prescribing the mean curvature function of a codimension-one foliation,
and examples with harmonic and totally umbilical foliations are given.

Rovenski and Wolak \cite{rovwol} studied the $D$-conformal flow of metrics
on a foliation of any codimension with the speed
proportional to the $\calf$-divergence of $H$.
Based on known long-time existence results for the heat flow they showed convergence of a solution to a metric for which
$H=0$;
actually under some topological assumptions they prescribe the mean curvature $H$.
The~conditions in their results are rather different and seem to be stronger of known hypotheses to guarantee tautness
in non-constructive results. This makes sense because it is much harder to provide the good GF.

For $D$-conformal flows of metrics on foliations, we have $\hat S(g)=s(g)\,\hat g$,
where $s(g)$ is a smooth function on the space of metrics on $M$, and the $D$-truncated metric tensor $\hat g$ is given~by
 $\hat g(N,\cdot)=0$ and $\hat g(X_1,X_2)=g(X_1,X_2)$ for all
 $X_i\in D,\ N\in D_\calf$.
By Lemma~\ref{L-nablaNN} in Section~\ref{subsec:tvarb}, $D$-conformal variations of a metric $g$ preserve the property ``$\calf$ is totally geodesic".

In~the paper we continue studying $D$-conformal GFs on a foliated manifold $(M,\calf)$ and,
in order to prescribe the positive $\Sc_{\,\rm mix}$, introduce the following flow of totally geodesic metrics:
\begin{equation}\label{E-GF-Ricmix-mu}
 \dt g = -2\,(\Sc_{\,\rm mix} -\|T\|^2 -\Phi)\,\hat g.
\end{equation}
Here $\Phi:M\to\RR$ is an arbitrary function constant along the leaves, it is used 
to normalize the flow equation
in order to obtain the convergence of solution metrics $g_t$ as $t\to\infty$.
 For integrable distribution $D$, we have the PDE (\ref{E-GF-Ricmix-mu}) with $T=0$.

In~order to prescribe the positive mixed scalar curvature, we examine the following.

\noindent
\textbf{Question 3}:
\textit{Given a Riemannian manifold $(M,g)$ with a totally geodesic foliation $\calf$,
when do the solution metrics $g_t$ of (\ref{E-GF-Ricmix-mu}) converge as $t\to\infty$
to a metric $\bar g$ for which $\overline{\rm Sc}_{\,\rm mix}$ is positive}?

\begin{example}\label{Ex-surfrev}\rm
(a) For a Hopf fibration $\pi:S^{2n+1}\to\CC P^n$ with fiber $S^1$, the orthogonal
distribution $D$ is non-integrable while it is totally geodesic ($T\ne0,\,b=0$).
The standard metric on $S^{2n+1}$ is a fixed point of (\ref{E-GF-Ricmix-mu}) with $\Phi=0$,
and we have
$\Sc_{\,\rm mix}=\|T\|^2>0$ (see Lemma~\ref{L-CC-riccati} in Section~\ref{subsec:prel}).

(b) Let $(M^2, g_0)$ be a surface (i.e., $\dim M=2$)
foliated by geodesics, and $K$ its Gaussian curvature.
Since one-dimensional distribution $D$ is integrable, we have $T=0$.
Then one may take $\Phi=0$, and reduce (\ref{E-GF-Ricmix-mu}) to the following~form:
\begin{equation}\label{E-GF-KM2}
 \dt g = -2\,K\,\hat g.
\end{equation}
The known Cole-Hopf transformation reduces the non-linear Burgers equation to the linear heat equation.
We observe that (\ref{E-GF-KM2}) yields both above mentioned PDEs for foliated surfaces.
Let $M_t^2\subset\RR^3$ be a smooth family of surfaces of revolution about the $Z$-axis,
\begin{equation*}
 r(x,\theta, t)=[\rho(x,t)\cos\theta,\ \rho(x,t)\sin\theta,\ h(x,t)]\qquad (0\le x\le l,\ -\pi\le \theta\le \pi).
\end{equation*}
Let the profile curves be the leaves of $\calf$ (geodesics).
Denote by $k$ the geodesic curvature of parallels (circles orthogonal to the leaves).
 The following properties are equivalent (see also Theorem~\ref{T-twisted1}):

 (i) The induced metrics $g_t$ (on $M_t^2$) are the solution of (\ref{E-GF-KM2}).

 (ii) The distance $\rho>0$ from the profile curve to the axis satisfies the heat equation
 $\rho_{,t}=\rho_{,xx}$.

 (iii) The geodesic curvature $k$ of parallels satisfies the Burgers equation
  $k_{,t}+(k^2)_{,x} =k_{,xx}$.
\end{example}

\noindent
\textbf{1.4. The structure of the paper}.
Section~\ref{sec:intro} introduces the $D$-conformal flow of metrics
and Section~\ref{sec:main-res} collects main results concerning Questions 2--3.
Section~\ref{sec:tvar} represents $D$-conformal variations of geometric quantities
and
Section~\ref{sec:egf} proves main results and applications to foliated surfaces.
Section~\ref{R-burgers-heat}
(Appendix) deals with linear parabolic PDEs on a closed Riemannian manifold
(its results seem to be known, but for the convenience of a reader we prove them).

\section{Main results}
\label{sec:main-res}

An important foundational step in the study of any system of evolutionary
PDEs is to show short-time existence and uniqueness.

\begin{proposition}\label{T-main-loc}
Let $\calf$ be a totally geodesic foliation of a closed Riemannian manifold $(M, g_0)$.
Then (\ref{E-GF-Ricmix-mu}) has a unique (smooth along the leaves) solution $g_t$ defined on a positive time interval~$[0,\eps)$.
\end{proposition}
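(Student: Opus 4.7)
The plan is to reduce the tensor equation (\ref{E-GF-Ricmix-mu}) to a single scalar quasilinear PDE for the conformal factor, and then solve that PDE by Picard iteration based on the linear leaf-parabolic theory from the Appendix. First I would use the $D$-conformal ansatz
\[ g_t = g_0 + \big(e^{2u(t,\cdot)}-1\big)\,\hat g_0, \]
where $u:[0,\eps)\times M\to\RR$ is an unknown scalar with $u(0,\cdot)=0$; this is compatible with $\calf$ remaining totally geodesic by Lemma~\ref{L-nablaNN}. A direct computation gives $\hat g_t = e^{2u}\hat g_0$ and $\dt g_t = 2(\dt u)\,\hat g_t$, so (\ref{E-GF-Ricmix-mu}) becomes the equivalent scalar equation $\dt u = -\big(\Sc_{\,\rm mix}(g_t) - \|T\|_{g_t}^2 - \Phi\big)$ with $u(0,\cdot)=0$.

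Next, I would expand the right-hand side using (\ref{eq-ran}) and the $D$-conformal variation formulas of Section~\ref{sec:tvar}. Since $H\in D_\calf$ and $\calf$ is totally geodesic, the ambient divergence decomposes as $\Div H = \Div^\calf H - |H|^2$, so $\Sc_{\,\rm mix}-\|T\|^2 = \Div^\calf H - \|b\|^2$. Under the conformal change with factor $e^{2u}$, the mean curvature vector $H_{g_t}$ picks up a contribution proportional to the leaf-gradient $\nabla^\calf u$, so $\Div^\calf H_{g_t}$ produces a leaf-Laplacian $\Delta^\calf u$ with a positive coefficient. The remaining terms $\|b\|_{g_t}^2$ and $\|T\|_{g_t}^2$ are at most first-order in $u$ with coefficients smooth in $x\in M$. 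Hence the PDE takes the form
\[ \dt u = c\,\Delta^\calf u + F(x,t,u,\nabla^\calf u), \qquad c>0, \]
quasilinear and parabolic along each leaf, but transversely degenerate.

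Finally, short-time existence and uniqueness would follow from a Picard iteration: let $u_{k+1}$ solve the linearized problem $\dt u_{k+1} = c\,\Delta^\calf u_{k+1} + F(x,t,u_k,\nabla^\calf u_k)$ with $u_{k+1}(0,\cdot)=0$. Each step is a linear leaf-parabolic PDE with smooth bounded coefficients on the closed manifold $M$, which admits a unique leaf-smooth solution by the Appendix (Section~\ref{R-burgers-heat}). Since $M$ is closed, $F$ is locally Lipschitz in $(u,\nabla^\calf u)$ uniformly in $(x,t)$, so the iteration is a contraction in an appropriate leaf-anisotropic H\"older norm on $[0,\eps)\times M$, producing a unique fixed point $u$ on some short interval. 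The main obstacle is the transverse degeneracy of $\Delta^\calf$: the equation is not uniformly parabolic on $M$ and standard parabolic theory on $M$ is unavailable, so one must work in function spaces that respect the foliated structure (full regularity along leaves, only continuity transverse to them) --- which is precisely the framework provided by the Appendix.
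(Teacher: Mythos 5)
Your proposal is correct and rests on the same key computation as the paper --- the $D$-conformal variation formulas of Section~\ref{sec:tvar}, which identify $n\,\Delta_\calf$ as the principal part of the evolution --- but it carries the argument noticeably further. The paper's own proof only exhibits the linearization of (\ref{E-GF-Ricmix-mu}) at $g_0$ (via Lemma~\ref{L-dtRStrunc}), notes that it is a leafwise heat equation with drift, and appeals to ``the theory of linear parabolic PDEs''. You instead substitute $\hat g_t=e^{2u}\hat g_0$ and, using (\ref{E-RicNs}) together with $H_t=H_0-n\nabla^\calf u$ (from Lemma~\ref{L-btAt2}) and the conservation of $\beta_D$, arrive at the exact scalar equation
\begin{equation*}
\dt u \;=\; n\,\Delta_\calf u \;+\; n\,|\nabla^\calf u|^2 \;-\; 2\,g(H_0,\nabla^\calf u)\;+\;\Big(\frac1n\,|H_0|^2+n\,\beta_D+\Phi-\Div^\calf H_0\Big),
\end{equation*}
which is \emph{semilinear} with constant diffusion coefficient $n$; this is precisely what lets your Picard/Duhamel iteration close without any quasilinear machinery, and it is the same structural fact the paper exploits later through the Cole--Hopf substitution (\ref{Ec-dtrho1}) in Proposition~\ref{T-mainA}. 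Two caveats, both of which apply equally to the paper's own argument: first, watch the sign bookkeeping --- $\Div^\calf H_t$ contributes $-n\,\Delta_\calf u$, and it is the overall minus sign in $\dt u=-(\Sc_{\,\rm mix}-\|T\|^2-\Phi)$ that produces the forward-parabolic $+n\,\Delta_\calf u$; second, the Appendix develops leafwise parabolic theory only for \emph{closed} leaves, while Proposition~\ref{T-main-loc} assumes only that $M$ is closed, so on non-compact leaves the spectral argument must be replaced by heat-kernel estimates on complete leaves of bounded geometry (plus Duhamel for the time-dependent source). Neither point invalidates your scheme; your version is, if anything, more complete than the one in the paper.
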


We denote $\nabla^\calf f:=(\nabla f)^\calf$. Given a vector field $X$ and a function $F$ on $M$,
define the functions using the derivatives along the leaves: the divergence $\Div^\calf X=\sum_{\alpha=1}^{p} g(\nabla_\alpha X, \eps_\alpha)$ and the Laplacian $\Delta_\calf F=\Div^\calf(\nabla^\calf F)$. Remark that
$\nabla^\calf, \Div^\calf$ and $\Delta_\calf$ (i.e., along the leaves) are $t$-independent.

Notice the ``linear algebra" inequality $n\,\|b\,\|^2\ge |H|^2$
with the equality when the distribution $D$ is totally umbilical.
 Consider the following non-negative measure of ``non-umbilicity" of $D$:
\[
 \beta_D:=n^{-2}\big(n\,\|b\,\|^2 -|H|^2\big).
\]
For $p=1$, we have $\beta_D=n^{-2}\sum_{i<j}(k_i-k_j)^2$, where $k_i$ are principal curvatures of~$D$, see \cite{wa2}.

 By Proposition~\ref{R-consumb} in Section~\ref{subsec:tvarb}, (\ref{E-GF-Ricmix-mu}) preserves $\beta_D$
and the Schr\"{o}dinger operator
 $\mathcal{H}=-\Delta_\calf -\beta_D\id$
on the leaves. Recall that $-\Delta_\calf\ge0$.
Let $\lambda_0$ be the smallest real eigenvalue of $\mathcal{H}$
with a positive eigenfunction $e_0$ of unit $L^2$-norm.
Notice that $\lambda_0$ is constant on the leaves, and
if the leaves are compact then $\lambda_0(x)\ge-\max\limits_{F(x)}\beta_D$,
where $F(x)$ is the leaf through $x$.
To~show this, define on $F(x)$ the operator $\tilde{\mathcal{H}}=-\Delta_\calf -(\max\limits_{F(x)}\beta_D)\id$.
From $\mathcal{H}\ge\tilde{\mathcal{H}}\ge-(\max\limits_{F(x)}\beta_D)\id$
the estimate of $\lambda_0$ follows.

Denote by $[g_0]$ the $D$-conformal class of the metric $g_0$ on a foliated manifold $(M,\calf)$.
Certainly, $\beta_D$ and the operator $\mathcal{H}$ depend on $[g_0]$ only.

In Proposition~\ref{T-mainA} we observe that (\ref{E-GF-Ricmix-mu}) yields the multidimensional Burgers equation (for the mean curvature vector $H$)
 on any leaf,
and has a single-point global attractor when the leaf is compact.

\begin{proposition}\label{T-mainA}
Let $\calf$ be a totally geodesic foliation of a Riemannian manifold $(M, g_0)$,
and $H_0=-n\nabla^\calf(\log u_0)$ for some positive function $u_0$ on $M$.
If the  metrics $g_t\ (t\ge0)$ satisfy (\ref{E-GF-Ricmix-mu}) then

 (i) The mean curvature vector of $D$ w.r.t. $g_t$ is a unique solution of the forced Burgers PDE
\begin{equation}\label{Ec-ANtau1}
 \dt H +\nabla^\calf g(H,H)= n\,\nabla^\calf(\Div^\calf H) -n^2\,\nabla^\calf\beta_D,
\end{equation}

 (ii) $H$ converges exponentially as $t\to\infty$ on compact leaves to the vector field
  $\bar H{=}-n\nabla^\calf(\log e_0)$.
\end{proposition}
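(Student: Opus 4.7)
The plan for (i) is to differentiate $H$ in time along the flow and combine the variation formula for $H$ under $D$-conformal deformations with Ran's identity (\ref{eq-ran}). Write the flow (\ref{E-GF-Ricmix-mu}) as $\dt g = 2\sigma\,\hat g$ with $\sigma := -(\Sc_{\,\rm mix}-\|T\|^2-\Phi)$. I expect the variation formulas collected in Section~\ref{subsec:tvarb} to yield the scaling law $\dt H = -n\,\nabla^\calf \sigma$ (a quick sanity check on a warped product $e^{2\phi(y)}dx^2+dy^2$ with $n=p=1$ gives $H=-\phi'\partial_y$, $\dt\phi=\sigma$, hence $\dt H=-\sigma'\partial_y=-\nabla^\calf\sigma$). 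A short calculation using $\Div H = \Div^\calf H - |H|^2$ (valid since $H\in D_\calf$) and the algebraic identity $\|b\,\|^2 = n^{-1}|H|^2 + n\beta_D$ (from the definition of $\beta_D$) rewrites
\[
 \Sc_{\,\rm mix}-\|T\|^2 = \Div^\calf H - n^{-1}g(H,H) - n\,\beta_D.
\]
Applying $\nabla^\calf$ to both sides and using $\nabla^\calf\Phi=0$ (since $\Phi$ is constant along leaves) together with the fact that the flow preserves $g$ on $D_\calf$ (so $g(H,H)$ is computed with the $t$-independent leaf metric) produces exactly (\ref{Ec-ANtau1}).

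For (ii), I would linearise (\ref{Ec-ANtau1}) via the Cole--Hopf substitution $H = -n\,\nabla^\calf(\log u)$, using the initial datum $u_0$ from the hypothesis. Expanding each term in $u$ and using $\Delta_\calf\log u = \Delta_\calf u/u - |\nabla^\calf\log u|^2$, the nonlinear contributions cancel and the equation collapses to
\[
 \nabla^\calf\!\bigl(\dt u/u - n\,\Delta_\calf u/u - n\,\beta_D\bigr)=0
\]
on each leaf. Absorbing the leaf-dependent integration constant into a multiplicative factor of $u$ (which does not alter $\nabla^\calf\log u$, hence leaves $H$ unchanged) yields the $t$-independent linear parabolic equation
\begin{equation*}
 \dt u = -n\,\mathcal{H}\,u,\qquad \mathcal{H} = -\Delta_\calf-\beta_D\,\id,
\end{equation*}
with $\mathcal{H}$ $t$-independent by Proposition~\ref{R-consumb}. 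Since this PDE acts fibrewise, classical parabolic theory on compact leaves gives existence and uniqueness of positive solutions for the prescribed initial datum; unravelling the Cole--Hopf map yields uniqueness of $H$ in~(i) as well.

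The asymptotics then follow from spectral analysis of $\mathcal{H}$ on compact leaves (standard, with a pointer to Appendix~\ref{R-burgers-heat}): $\lambda_0$ is simple with positive eigenfunction $e_0$, and decomposing the initial datum as $u_0 = c_0(x)\,e_0 + (\text{higher modes})$, where $x$ labels the leaf and $c_0(x)>0$ by positivity of $u_0$, gives
\[
 u(t,\cdot) = e^{-n\lambda_0 t}\bigl(c_0(x)\,e_0 + O(e^{-n(\lambda_1-\lambda_0)t})\bigr).
\]
Since both $c_0(x)$ and the prefactor $e^{-n\lambda_0 t}$ are constant along each leaf, $\log u$ differs from $\log e_0$ by a leaf-constant up to an exponentially small remainder, so $\nabla^\calf\log u\to\nabla^\calf\log e_0$ exponentially, giving $H\to\bar H = -n\,\nabla^\calf(\log e_0)$.

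The main obstacle I anticipate is rigorously verifying the variation formula $\dt H = -n\,\nabla^\calf\sigma$: although plausible and easy to check in the warped-product toy model, the second fundamental form $b$, the Levi-Civita connection, and any chosen orthonormal frame on $D$ all depend on~$t$, so one must verify that after $D$-rescaling the $D_\calf$-components of the relevant covariant derivatives contribute only a gradient term along the leaves. This is precisely the content I would extract from Section~\ref{subsec:tvarb}; everything else in the argument is an algebraic rearrangement plus standard linear parabolic/spectral input.
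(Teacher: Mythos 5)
Your proposal is correct and follows essentially the same route as the paper: part (i) is obtained from the variation formula $\dt H=-\frac n2\,\nabla^\calf s$ of Lemma~\ref{L-btAt2} combined with identity (\ref{E-RicNs}) and $\nabla^\calf\Phi=0$, and part (ii) via the Cole--Hopf substitution $H=-n\,\nabla^\calf(\log u)$ reducing (\ref{Ec-ANtau1}) to the fibrewise linear equation $\dt u=-n\,\mathcal{H}(u)$ followed by spectral decomposition with simple principal eigenvalue $\lambda_0$ and positive eigenfunction $e_0$ (Proposition~\ref{T-CH} and Theorem~\ref{T-basis}). The variation formula you flag as the main obstacle is exactly (\ref{E-nablaNNt2s})$_2$, already established in Section~\ref{subsec:tvarb}.
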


 The central result of the work is Theorem~\ref{T-main0} (and its applications), where
 for convergence of solution $g_t$ as $t\to\infty$ on a fiber bundle,
 the flow of metrics (\ref{E-GF-Ricmix-mu}) is normalized (taking $\Phi=n\,\lambda_0$).

\begin{theorem}\label{T-main0}
Let $\pi: (M,g_0)\to B$ be a fibration with compact totally geodesic fibers,
and $H_0=-n\nabla^\calf(\log u_0)$ for a positive function $u_0$ on $M$ (the potential).
Then

(i) The PDE (\ref{E-GF-Ricmix-mu}) admits a unique global smooth solution $g_t\ (t\ge0)$ on $M$.

(ii) Let $\Phi=n\,\lambda_0$. Then the metrics $g_t$ converge as $t\to\infty$ to the limit metric $\bar g$.
There exists real $\tilde C>0$ depending on $[g_0]$ such that
$\overline{\Sc}_{\,\rm mix}>0$ when
$\|T(x)\|^2_{g_0}>\tilde C\max\nolimits_{F(x)}\beta_D$ for all $x\in M$.
\end{theorem}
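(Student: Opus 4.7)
The plan is to reduce the non-linear flow (\ref{E-GF-Ricmix-mu}) to a linear parabolic PDE along each (compact) fiber via the Cole--Hopf substitution $H_t = -n\nabla^\calf(\log u_t)$, and then invoke the spectral theory of the Schr\"odinger operator $\mathcal{H} = -\Delta_\calf - \beta_D\,\id$. Since the flow is $D$-conformal, write $\hat g_t = e^{2\psi_t}\hat g_0$ so that (\ref{E-GF-Ricmix-mu}) becomes $\dt \psi = -(\Sc_{\,\rm mix} - \|T\|^2 - \Phi)$. By Proposition~\ref{T-mainA}, $H_t$ satisfies the forced Burgers equation (\ref{Ec-ANtau1}) on each fiber; inserting the Cole--Hopf ansatz and applying $\nabla^\calf$ produces $\nabla^\calf\bigl(\dt u/u - n\Delta_\calf u/u - n\beta_D\bigr)=0$ on each leaf. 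Integrating out the resulting constant-along-fibers ambiguity and absorbing it through the choice $\Phi = n\lambda_0$ (Proposition~\ref{R-consumb} guarantees $\beta_D$, hence $\mathcal{H}$ and $\lambda_0$, are preserved by the flow) reduces the system to the linear heat--Schr\"odinger equation $\dt u = -n(\mathcal{H}-\lambda_0)u$ on each fiber.

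Part (i) then follows from the Appendix: this linear parabolic equation, with smooth time-independent potential $\beta_D$ on each closed fiber, admits a unique global smooth solution $u_t$; the parabolic maximum principle preserves $u_t > 0$ from the positive initial datum $u_0$, so one recovers $\psi_t$ and hence $g_t$ on $M \times [0,\infty)$. For part (ii), since $\mathcal{H} - \lambda_0 \geq 0$ on each fiber with one-dimensional kernel spanned by the positive eigenfunction $e_0$ of unit $L^2$-norm, the spectral decomposition gives $u_t \to c_x\,e_0$ exponentially at rate $n(\lambda_1-\lambda_0)$, where $c_x$ is the $L^2$-projection of $u_0|_{F(x)}$ onto $e_0$. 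This yields uniform convergence $\psi_t \to \psi_\infty = -\tfrac1n\log(c_x e_0)$, hence of $g_t$ to a limit metric $\bar g$ with conformal factor $F:=e^{\psi_\infty} = (c_x e_0)^{-1/n}$.

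To extract the positivity claim, I would pass to the limit in (\ref{E-GF-Ricmix-mu}): since $\dt g_t \to 0$, one obtains $\overline{\Sc}_{\,\rm mix} = \|\bar T\|^2_{\bar g} + n\lambda_0$. A direct calculation from the definition of $T$ shows that $\|\bar T\|^2_{\bar g} = F^{-4}\|T\|^2_{g_0}$, since $\bar g$-orthonormal vectors in $D$ differ from $g_0$-orthonormal ones by the factor $F^{-1}$, whereas $T$ takes values in $D_\calf$ where the metric is unchanged. Combining with the a priori bound $\lambda_0 \geq -\max_{F(x)}\beta_D$ yields $\overline{\Sc}_{\,\rm mix} \geq F^{-4}\|T\|^2_{g_0} - n\max_{F(x)}\beta_D$, and setting $\tilde C := n\sup_M F^4$ (finite, positive, and depending only on $[g_0]$ through $e_0$ and $u_0$) gives the desired sufficient condition. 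The hard part will be rigorously justifying the Cole--Hopf reduction: verifying that the gradient ansatz $H_t = -n\nabla^\calf(\log u_t)$ is genuinely propagated by (\ref{Ec-ANtau1}) for all $t>0$ (not just $t=0$), handling the constant-along-fibers integration in a manner compatible with the fiber bundle structure, and tracking how $\tilde C$ depends on $[g_0]$ through the bundle of spectral data $\{e_0(x),\lambda_0(x),c_x\}$.
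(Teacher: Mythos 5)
Your proposal follows essentially the same route as the paper: Cole--Hopf reduction of the forced Burgers equation for $H$ to the linear Schr\"odinger equation $\dt u=-n\mathcal{H}(u)$ on each compact fiber, positivity via the maximum principle, exponential convergence from the spectral gap $\lambda_1-\lambda_0$, the limit identity $\overline{\Sc}_{\,\rm mix}=\|T\|^2_{\bar g}+n\lambda_0$, and the quartic scaling of $\|T\|^2$ under the $D$-conformal change to produce $\tilde C$. The only (cosmetic) difference is that you make the limit conformal factor explicit through the spectral projection $c_x e_0$, where the paper simply invokes the uniform equivalence constant $c$ of the metrics $g_t$.
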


\begin{corollary}\label{C-01}
In conditions of Theorem~\ref{T-main0} we have the following:

(a) If $\nabla^\calf\beta_D=0$ then $\calf$ is $\bar g$-transversely harmonic
and $\overline{\Sc}_{\,\rm mix}\ge0$, while $\overline{\Sc}_{\,\rm mix}>0$ on~$U_T$.

(b) If $\beta_D=0$ then $\calf$ is $\bar g$-Riemannian and $\overline{\Sc}_{\,\rm mix}$ is constant on the fibers;

\quad moreover, if $D$ is integrable, then $\overline{\Sc}_{\,\rm mix}=0$ and $M$ is locally
the product with respect to $\bar g$

\quad
(the product globally for simply connected~$M$).
\end{corollary}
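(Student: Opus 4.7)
The plan is to combine Proposition~\ref{T-mainA}(ii) (which pins $\bar H$ down in terms of the ground state $e_0$ of $\mathcal{H} = -\Delta_\calf - \beta_D\,\id$), the preservation of $\beta_D$ under (\ref{E-GF-Ricmix-mu}), and Ran's identity (\ref{eq-ran}). Under $\nabla^\calf \beta_D = 0$, the function $\beta_D$ is a constant $c_F$ on each compact fiber $F$, so $\mathcal{H}|_F = -\Delta_\calf - c_F\,\id$; since the smallest eigenvalue of $-\Delta_\calf$ on a closed manifold is $0$, attained by constants, one reads off $\lambda_0|_F = -c_F$ with $L^2$-normalized positive eigenfunction $e_0|_F$ constant. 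Proposition~\ref{T-mainA}(ii) then yields $\bar H = -n\,\nabla^\calf(\log e_0) = 0$, i.e., $\calf$ is $\bar g$-transversely harmonic. Substituting $\bar H = 0$ in (\ref{eq-ran}) and using preservation of $\beta_D$ (which, with $\bar H = 0$, forces $\|\bar b\,\|^2 = n\beta_D$) gives
\[
  \overline{\Sc}_{\,\rm mix} = \|\bar T\|^2 - n\beta_D.
\]
Non-negativity then follows from the quantitative inequality of Theorem~\ref{T-main0}(ii) in the present situation, where $\max_{F(x)}\beta_D = \beta_D(x)$; strict positivity on $U_T$ is immediate since $\|\bar T\|^2 > 0$ there.

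\textbf{Part (b) and the product conclusion.}
When $\beta_D \equiv 0$, the same eigenfunction argument applied to $\mathcal{H} = -\Delta_\calf$ gives $\bar H = 0$, and preservation of $\beta_D$ together with $\bar H = 0$ now forces $\|\bar b\,\|^2 = 0$, i.e., $\bar b = 0$, so $\calf$ is $\bar g$-Riemannian. Formula (\ref{eq-ran}) reduces to $\overline{\Sc}_{\,\rm mix} = \|\bar T\|^2$, whose fiber-constancy is a classical consequence of O'Neill's identities for the $A$-tensor of a Riemannian submersion with totally geodesic fibers. If in addition $D$ is integrable, then $T \equiv 0$, hence $\overline{\Sc}_{\,\rm mix} = 0$ and $(M, \bar g)$ carries two complementary, orthogonal, totally geodesic and integrable foliations; the local de Rham decomposition theorem then produces a local product, which is global under simple-connectedness.

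\textbf{Main obstacle.}
The delicate step is establishing $\overline{\Sc}_{\,\rm mix} \geq 0$ in (a): the algebraic identity $\overline{\Sc}_{\,\rm mix} = \|\bar T\|^2 - n\beta_D$ drops out cleanly, but the pointwise bound $\|\bar T\|^2 \geq n\beta_D$ at the fixed point is not automatic. I expect this to be handled by exploiting that $\bar H = 0$ forces the terminal conformal factor $u_\infty$ to be constant along fibers, and then applying a sharpened form of the estimate of Theorem~\ref{T-main0}(ii) that effectively absorbs the constant $\tilde C$ in this fiber-constant setting; the same argument simultaneously promotes the inequality to a strict one on $U_T$.
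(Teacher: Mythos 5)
Your treatment of the harmonicity/Riemannian claims and of the splitting matches the paper's route: under $\nabla^\calf\beta_D=0$ the potential of $\mathcal{H}=-\Delta_\calf-\beta_D\id$ is constant on each compact fiber, so $e_0$ is fiberwise constant, $\lambda_0=-\beta_D$ and $\bar H=-n\nabla^\calf(\log e_0)=0$; under $\beta_D=0$ this together with $\|\bar b\,\|^2=n\beta_D+|\bar H|^2/n$ gives $\bar b=0$; and for integrable $D$ the two orthogonal totally geodesic foliations split $M$ by de Rham. The one methodological divergence is in part (b), for the fiber-constancy of $\overline{\Sc}_{\,\rm mix}=\|\bar T\|^2$: you invoke O'Neill's identities for a Riemannian submersion with totally geodesic fibers, whereas the paper reads it off from its own identity (\ref{E-nablaT-1b}), namely $N(\|T\|^2)=-4\tr(A_N(T^\sharp_N)^2)=0$ once $A_N=0$. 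Both work; the paper's version is self-contained and avoids having to argue that $\bar g$ is bundle-like.

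The genuine gap is the pair of sign claims in (a). From $\bar H=0$ and (\ref{E-RicNs}) one gets exactly $\overline{\Sc}_{\,\rm mix}=\|\bar T\|^2-n\beta_D$, as you say, but neither of your proposed ways to conclude non-negativity works. Citing ``the quantitative inequality of Theorem~\ref{T-main0}(ii)'' is circular: that conclusion is conditional on the largeness hypothesis $\|T(x)\|^2_{g_0}>\tilde C\max_{F(x)}\beta_D$, which is not among the hypotheses of the corollary, so it yields nothing here. Likewise ``strict positivity on $U_T$ is immediate since $\|\bar T\|^2>0$ there'' is false as written, since $\|\bar T\|^2>0$ does not give $\|\bar T\|^2>n\beta_D$. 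In fact on $M\setminus U_T$ the formula gives $\overline{\Sc}_{\,\rm mix}=-n\beta_D\le0$, with equality only where $\beta_D$ vanishes, so $\overline{\Sc}_{\,\rm mix}\ge0$ cannot follow from $\nabla^\calf\beta_D=0$ alone; some additional assumption (the largeness condition of Theorem~\ref{T-main0}(ii), or $\beta_D\equiv0$ as in part (b)) is genuinely required. Your ``main obstacle'' paragraph correctly isolates this point, but the hoped-for ``sharpened estimate'' is never produced and, by the computation above, cannot exist in the stated generality. For fairness: the paper's own proof of (a) is a single line ending at $\bar H=0$ and never addresses the curvature sign either, so you have not overlooked an argument the paper supplies; the defect in your write-up is that it presents the sign claims as established when they are not.
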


\begin{remark}\rm
 The condition $H=\nabla^\calf(\log u_0)$ of Theorem~\ref{T-main0} and Proposition~\ref{T-mainA}
 is satisfied for twisted products (see Theorem~\ref{T-twisted1} in what follows).
\end{remark}

We consider applications of Theorem~\ref{T-main0} to the twisted products.
The particular case of surfaces of revolution is studied in Section~\ref{sec:surfrev}
(see also Example~\ref{Ex-dt_rN} in Section~\ref{subsec:evolv}).

\begin{definition}[see \cite{pr}]\label{D-twisted}\rm
Let $(M_1,g_1)$ and $(M_2,g_2)$ be Riemannian manifolds,
and $f\in C^\infty(M_1\times M_2)$ a positive function.
The \textit{twisted product} $M_1\times_{f} M_2$ is
the manifold $M=M_1\times M_2$ with the metric $g=(f^2 g_1)\oplus g_2$.
If the warping function $f$ depends on $M_2$ only, then we have a \textit{warped product}.
\end{definition}

\begin{remark}\rm
The fibers $M_1\times\{y\}$ of a twisted product are umbilical with the \textit{mean curvature vector} $H=-\nabla^\calf(\log f)$, while $\{x\}\times M_2$ are totally geodesic.
The fibers of $M_1\times\{y\}$ have

(a) constant mean curvature if and only if $\|\nabla^\calf(\log f)\|$ is a function of $M_1$, and

(b) parallel mean curvature vector if and only if $f=f_1 f_2$ for some $f_i:M_i\to\RR_+\ (i=1,2)$.

\noindent
If on a simply connected complete Riemannian manifold $(M,g)$ two orthogonal foliations
with the properties (a)--(b) are given, then $M$ is a twisted product, see~\cite{pr}.
\end{remark}

 If $D$ is integrable and totally umbilical (i.e., $\beta_D=T=0$),
 then $\Phi=0$, hence (\ref{E-GF-Ricmix-mu}) is reduced to the $D$-conformal flow of metrics
 with the speed proportional to $\Sc_{\,\rm mix}$:
\ $\dt g = -2\,\Sc_{\,\rm mix}\,\hat g$.

\begin{theorem}\label{T-twisted1}
 Let $(M,g_t)=M_1\times_{f_t} M_2$ be a family of twisted products of Riemannian manifolds
 $(M_1, g_1)$ and $(M_2, g_2)$. Then the following properties are equivalent:

 (i) The metrics $g_t$ satisfy the evolution equation (\ref{E-GF-Ricmix-mu}).

 (ii) The mean curvature vector of fibers $M_1\times\{y\}$ satisfies the Burgers type PDE
\begin{equation}\label{E-multi-B}
 \dt H +\nabla^\calf g(H,H)= n\,\nabla^\calf(\Div^\calf H).
\end{equation}

 (iii) The warping function satisfies the heat equation $\dt f=n\,\Delta_\calf f$.
\end{theorem}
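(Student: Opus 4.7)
\textbf{Plan for Theorem~\ref{T-twisted1}.} The strategy is to reduce all three statements to equivalent PDEs for the warping function $f_t$, using the rigid structure of a twisted product.

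First I would set up the basic identities. For $g_t = f_t^2 g_1 \oplus g_2$, the truncation $\hat g_t$ equals $f_t^2 g_1$ on $D \times D$ and vanishes on $D_\calf$, so $\dt g_t = 2(\dt\log f_t)\,\hat g_t$. The orthogonal distribution $D = TM_1$ is integrable (hence $T \equiv 0$) and totally umbilical, with mean curvature vector $H = -n\,\nabla^\calf(\log f_t)$ obtained from a direct Christoffel-symbol computation. Consequently $\|b\|^2 = |H|^2/n$ and $\beta_D \equiv 0$; on each compact fiber this forces $\lambda_0 = 0$ via the constant eigenfunction of $-\Delta_\calf$, so the normalization $\Phi = n\lambda_0$ vanishes and (\ref{E-GF-Ricmix-mu}) reduces to $\dt g = -2\,\Sc_{\,\rm mix}\,\hat g$.

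Next I would compute $\Sc_{\,\rm mix}$ explicitly. From (\ref{eq-ran}), $T = 0$ and $\|b\|^2 = |H|^2/n$ give $\Sc_{\,\rm mix} = \Div H + \frac{n-1}{n}|H|^2$. A standard splitting of the total divergence of $H \in D_\calf$ using $g(H, e_i) = 0$ together with $\sum_i (\nabla_{e_i} e_i)^\calf = H$ yields $\Div H = \Div^\calf H - |H|^2$, whence $\Sc_{\,\rm mix} = \Div^\calf H - \frac{1}{n}|H|^2$. Substituting $H = -n\,\nabla^\calf \log f_t$ and using the identity $\Delta_\calf(\log f) = \Delta_\calf f / f - |\nabla^\calf f|^2/f^2$, the quadratic-in-$\nabla^\calf f$ terms cancel cleanly and produce $\Sc_{\,\rm mix} = -n\,\Delta_\calf f_t / f_t$.

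Combining the two previous paragraphs, (\ref{E-GF-Ricmix-mu}) for $g_t$ is equivalent to the scalar PDE $\dt f_t = n\,\Delta_\calf f_t$, establishing (i) $\Leftrightarrow$ (iii). The direction (i) $\Rightarrow$ (ii) is then immediate from Proposition~\ref{T-mainA} applied with $\beta_D = 0$, since the $\nabla^\calf\beta_D$ term in (\ref{Ec-ANtau1}) drops out. For (ii) $\Rightarrow$ (iii), I would plug $H = -n\,\nabla^\calf \log f_t$ into (\ref{E-multi-B}), use the same Cole--Hopf-type identity to regroup the nonlinear terms, and arrive at $\nabla^\calf[\,\dt\log f_t - n\,\Delta_\calf f_t / f_t\,] = 0$. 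The residual transversal freedom $f_t \mapsto \phi(t, \pi(x)) f_t$ would rescale the first factor $g_1$ by $\phi^2$ and is excluded by the hypothesis that $(M_1, g_1)$ and $(M_2, g_2)$ are fixed throughout the family, so the additive function of time and base coordinates must vanish and (iii) follows.

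The main technical point I expect is the computation $\Sc_{\,\rm mix} = -n\,\Delta_\calf f/f$: the factor $n$ is delicate and relies on using the trace (rather than averaged) convention for $H$, and the entire mechanism hinges on the cancellation of $|\nabla^\calf f|^2$ between $\Div^\calf H$ and $|H|^2/n$. The transversal-gauge argument that closes (ii) $\Rightarrow$ (iii) is the only other place where care is needed; after that, the three-way equivalence is automatic.
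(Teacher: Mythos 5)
Your equivalence (i) $\Leftrightarrow$ (iii) is correct and in fact more self-contained than the paper's argument: you compute $\Sc_{\,\rm mix}=-n\,\Delta_\calf f/f$ directly from (\ref{eq-ran}) (equivalently from (\ref{E-RicNs}) with $\beta_D=T=0$) and match it against $\dt g=2\,\dt(\log f)\,\hat g$, whereas the paper routes everything through Proposition~\ref{T-mainA} and the Cole--Hopf substitution (\ref{Ec-dtrho1}) with $u=f$, proving only (i) $\Rightarrow$ (ii), (iii) and declaring the remaining implications ``similar''. Your identification of $\Phi=n\lambda_0=0$, the factor $n$ coming from the trace convention $H=\tr_g b$, and the cancellation of the $|\nabla^\calf f|^2$ terms are all right. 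The implication (i) $\Rightarrow$ (ii) via Proposition~\ref{T-mainA} with $\beta_D=0$ coincides with the paper.

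The genuine problem is your closing of (ii) $\Rightarrow$ (iii). From (\ref{E-multi-B}) and $H=-n\,\nabla^\calf\log f_t$ you correctly obtain only $\nabla^\calf\big[\dt\log f_t-n\,\Delta_\calf f_t/f_t\big]=0$, i.e.\ $\dt f_t-n\,\Delta_\calf f_t=c(t,x)\,f_t$ with $c$ constant on the leaves $\{x\}\times M_2$. Your claim that the residual term is excluded because $(M_1,g_1)$ is fixed does not hold: in a twisted product the warping function is allowed to depend on $M_1$, so $\tilde f_t=\phi(t,x)f_t$ is again a legitimate family of twisted products over the \emph{same} fixed $(M_1,g_1)$ (the factor $\phi$ is absorbed into the warping function, not into $g_1$), and it produces the same $H$. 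Concretely, $f_t(x,y)=e^{t\,a(x)}$ gives $H\equiv0$, so (ii) holds trivially, yet $\dt f=a(x)f\ne0=n\,\Delta_\calf f$ whenever $a\not\equiv0$; thus (ii) does not imply (iii) without an additional normalization of $f$ (for instance fixing $\dt\psi$ as in (\ref{E-HC2}), which is what makes the forward direction work). This gap is present in the statement itself and is silently skipped by the paper's ``other implications can be shown similarly''; your write-up should either add such a normalization hypothesis or restrict the claimed equivalence to (i) $\Leftrightarrow$ (iii) together with (i) $\Rightarrow$ (ii).
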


\begin{corollary}\label{T-main2}
Let $M_1\times_{f} M_2$ be a twisted product of closed Riemannian manifolds $(M_1, g_1)$ and $(M_2,g_2)$
for a positive warping function $f\in C^\infty(M_1\times M_2)$.
Then (\ref{E-GF-Ricmix-mu}) admits a unique smooth solution $g_t\ (t\ge0)$,
consisting of twisted product metrics on $M_1\times_{f_t} M_2$.
As $t\to\infty$, the metrics $g_t$ converge to the metric $\bar g$ of the product
$(M_1,\bar f^{\,2} g_1)\times(M_2,g_2)$, where $\bar f(x)=\int_{M_2} f(0, x, y)\,d y_g $.
\end{corollary}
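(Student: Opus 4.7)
The plan is to invoke Theorem~\ref{T-twisted1} to reduce the flow to a scalar heat equation on the closed fiber $(M_2,g_2)$, apply standard parabolic theory, and read off the limit from the spectral properties of $\Delta_\calf$.

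\textbf{Setup and reduction.} For a twisted product, the horizontal distribution $D=TM_1$ is integrable ($T=0$) and totally umbilical, hence $\beta_D=0$; the Schr\"{o}dinger operator $\mathcal{H}=-\Delta_\calf$ then has ground state $\lambda_0=0$ with constant positive eigenfunction, so the normalization reduces to $\Phi=n\lambda_0=0$. Since $H=-\nabla^\calf(\log f)=-n\,\nabla^\calf(\log f^{1/n})$, the potential hypothesis of Theorem~\ref{T-main0} is met with $u_0=f^{1/n}$. By Theorem~\ref{T-twisted1} the twisted product ansatz $g_t=f_t^{\,2}\,g_1\oplus g_2$ is preserved under (\ref{E-GF-Ricmix-mu}), reducing it to the linear equation
\[
 \dt f = n\,\Delta_\calf f,\qquad f|_{t=0}=f,
\]
which, since $(M_2,g_2)$ is fixed and $\Delta_\calf$ is $t$-independent, is a classical heat equation on the closed manifold $(M_2,g_2)$ with $x\in M_1$ entering only as a smooth parameter through the initial datum.

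\textbf{Global existence, smoothness, positivity.} Standard parabolic theory on a closed Riemannian manifold (Section~\ref{R-burgers-heat}) yields a unique smooth global solution $f_t$; joint smoothness in $(t,x,y)$ follows by differentiating the equation in $x$ and applying parametrized parabolic regularity over the closed base $M_1$, since the equation itself is independent of $x$. The parabolic maximum principle preserves $f_t>0$, so each $g_t$ is a bona fide smooth twisted product metric in the $D$-conformal class of $g_0$.

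\textbf{Convergence.} The heat semigroup $e^{tn\Delta_\calf}$ on $(M_2,g_2)$ contracts exponentially to the orthogonal projection onto $\ker\Delta_\calf$ (the constants in $y$), with rate given by the spectral gap of $-\Delta_\calf$. The conserved fiberwise integral identifies the limit:
\[
 f_t(x,y)\ \longrightarrow\ \bar f(x)=\vol(M_2)^{-1}\!\int_{M_2}f(0,x,y)\,dy_g,
\]
with exponential rate uniform in $x$ since $M_1$ is closed. As $\bar f$ depends on $x$ alone, $\bar g=\bar f^{\,2} g_1\oplus g_2$ is the Riemannian product $(M_1,\bar f^{\,2}g_1)\times(M_2,g_2)$, as claimed. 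The only mild technical point is establishing uniformity in the parameter $x$, which follows from the spectral gap of $\Delta_\calf$ together with the compactness of $M_1$; remarkably, the nonlinear flow of metrics collapses entirely to linear heat theory, so no genuinely new analytical obstacle arises.
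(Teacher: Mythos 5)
Your proposal is correct and follows essentially the same route as the paper: Theorem~\ref{T-twisted1} preserves the twisted-product ansatz and reduces (\ref{E-GF-Ricmix-mu}) to the fiberwise heat equation $\dt f=n\,\Delta_\calf f$ on the closed fiber $(M_2,g_2)$, from which global existence, positivity and exponential convergence follow. The only difference is in the last step: the paper concludes the product structure abstractly, by appealing to Theorem~\ref{T-main0} to get $\bar H=0$, then using umbilicity of $M_1\times\{y\}$ to force $\bar b=0$ and invoking the de Rham decomposition, whereas you identify the limit explicitly as the fiberwise mean and observe that $\bar f$ depends on $x$ alone; both are valid, and yours is the more concrete. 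One remark: your normalized average $\bar f(x)=\vol(M_2,g_2)^{-1}\int_{M_2}f(0,x,y)\,dy_g$ is the analytically correct limit of the heat semigroup (consistent with Example~\ref{Ex-S1}); the formula printed in the corollary omits the factor $\vol(M_2,g_2)^{-1}$, which appears to be a typo rather than an error on your side.
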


\section{Auxiliary results}
\label{sec:tvar}

\subsection{Preliminaries}
\label{subsec:prel}

For the convenience of a reader, we recall some definitions.

\begin{definition}[see \cite{rov-m}]\rm
A family ${\mathcal F}=\{F_\alpha\}_{\alpha\in B}$ of connected subsets of a manifold $M^{n+p}$
is said to be a $p$-\textit{dimensional foliation}, if
\,1)~$\bigcup_{\alpha\in B} F_\alpha=M^{n+p}$,
\,2)~$\alpha\ne\beta\Rightarrow F_\alpha\bigcap F_\beta=\emptyset$,
\,3)~for any point $x\in M$ there exists a $C^r$-chart
$\varphi_x: U_x\to\RR^p$ such that $y\in U_x$, $\varphi_x(y)=0$, and if
$\ U_x\bigcap F_\alpha\ne\emptyset$ the connected components of the sets
$\varphi_x(U_x\bigcap F_\alpha)$ are given by equations
$x_{p+1} = c_{p+1},\ \ldots,\ x_{n+p}=c_{n+p},$ where $c_j$'s are constants.
The sets $F_\alpha$ are immersed submanifolds of $M$ called \textit{leaves} of $\calf$.
\end{definition}

\begin{definition}\rm
Let $F$ and $B$ be smooth manifolds.
A \textit{fiber bundle} over $B$ with fiber $F$ is a smooth manifold $M$,
together with a surjective submersion $\pi: M\to B$ satisfying a local triviality condition:
For any $x\in B$ there exists an open set $U$ in $B$ containing $x$,
and a diffeomorphism $\phi:\pi^{-1}(U)\to U\times F$ (called a local trivialization)
such that $\pi=\pi_1\circ\phi$ on $\pi^{-1}(U)$, where $\pi_1(x,y)=x$ is the projection on the first factor. The fiber at $x$, denoted by $F_x$, is the set $\pi^{-1}(x)$, which is diffeomorphic to $F$ for each $x$.
We call $M$ the total space, $B$ the base space and $\pi$ the projection.
\end{definition}

The Weingarten operator $A_N$ of $D$ w.\,r.\,t. to $N\in D_\calf$ and the
operator $T^\sharp_N$ are given~by
\begin{equation*}
 g(A_N(X),\,Y) = g(b(X,Y),\ N),\qquad
 g(T^\sharp_N(X),\,Y) = g(T(X,Y),\ N),\qquad (X,Y\in D).
\end{equation*}
The~\textit{co-nullity operator} $C: D_\calf\times D\to D$ (see for example \cite{rov-m}) is defined~by
\begin{equation*}
  C_N(X)=-(\nabla_{\!X} N)^\bot\quad (X\in D,\ N\in D_\calf).
\end{equation*}
Hence $C_N=A_N+T^\sharp_N$ -- the linear operator on orthogonal distribution $D$.
The equality $C=0$ means that $D$ is integrable and the integral manifolds are totally geodesic in $M$.

Define the self-adjoint $(1,1)$-tensor $R_N=R(N,\cdot)N\ (N\in D_\calf)$ on $D$, called Jacobi operator.
Next lemma represents (\ref{eq-ran}) in equivalent form as (\ref{E-RicNs}).

\begin{lemma}\label{L-CC-riccati}
For a totally geodesic foliation,
we have
\begin{eqnarray}\label{E-riccati}
 \nabla_N\,C_N=C_N^2+R_N\qquad(N\in D_\calf),\\
\label{E-RicNs}
 \Sc_{\,\rm mix}  -\|T\|^2 = \Div^\calf H -g(H, H)/n -n\,\beta_D.
\end{eqnarray}
\end{lemma}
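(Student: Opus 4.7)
The plan is to prove the Riccati equation (\ref{E-riccati}) first by a direct covariant-derivative computation along leaf-geodesics, and then to derive (\ref{E-RicNs}) by tracing (\ref{E-riccati}) over $D$ and summing over an orthonormal frame of $D_\calf$. An alternative algebraic derivation of (\ref{E-RicNs}) directly from the classical formula (\ref{eq-ran}) can also be noted as a sanity check.

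For (\ref{E-riccati}), I would fix $p\in M$, $X\in D_p$ and $N\in (D_\calf)_p$, extend $N$ along its integral curve $\gamma$ so that $\nabla_N N=0$ on $\gamma$ (possible since $\calf$ is totally geodesic, so $\gamma$ remains in its leaf), and extend $X$ so that $[N,X]=0$ at $p$. The curvature identity $R(N,X)N=\nabla_N\nabla_X N-\nabla_X\nabla_N N-\nabla_{[N,X]}N$ reduces at $p$ to $R_N(X)=\nabla_N\nabla_X N-\nabla_X\nabla_N N$. Decomposing $\nabla_X N=-C_N X+(\nabla_X N)^\calf$ and differentiating in $N$, I use the totally geodesic hypothesis ($\nabla_N W\in D_\calf$ whenever $W\in D_\calf$) to kill the $D$-component of $\nabla_N(\nabla_X N)^\calf$, and identify $(\nabla_X\nabla_N N)^\bot=-C_{\nabla_N N}(X)$. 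The tensor-derivative interpretation $\nabla_N C_N=(\nabla_N C)(N)+C_{\nabla_N N}$ (whose correction term vanishes along $\gamma$) then yields $(\nabla_N C_N)(X)=C_N^2(X)+R_N(X)$ as an identity on $D$.

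For (\ref{E-RicNs}), I would trace (\ref{E-riccati}) over $D$ and sum over $\{\eps_\alpha\}$. Three contributions arise. First, $\sum_\alpha \tr_D R_{\eps_\alpha}=\sum_{\alpha,i}K(\eps_\alpha,e_i)=\Sc_{\,\rm mix}$ by definition. Second, decomposing $C_N=A_N+T^\sharp_N$ with $A_N$ symmetric and $T^\sharp_N$ antisymmetric on $D$, the cross term $\tr(A_N T^\sharp_N)$ vanishes by the symmetry/antisymmetry mismatch, yielding $\sum_\alpha \tr C_{\eps_\alpha}^2=\|b\|^2-\|T\|^2$ via $\sum_\alpha g(b(e_i,e_j),\eps_\alpha)^2=|b(e_i,e_j)|^2$ and the analogous identity for $T$. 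Third, since $\tr C_N=g(H,N)$ (a direct computation using $H=\sum_i(\nabla_{e_i}e_i)^\calf$) and trace commutes with covariant derivative on $\mathrm{End}(D)$, the tensor identity $(\nabla_N C)(N)=\nabla_N(C_N)-C_{\nabla_N N}$ gives
\[
 \sum_\alpha \tr\bigl[(\nabla_{\eps_\alpha}C)(\eps_\alpha)\bigr]=\sum_\alpha\bigl[\eps_\alpha\,g(H,\eps_\alpha)-g(H,\nabla_{\eps_\alpha}\eps_\alpha)\bigr]=\Div^\calf H.
\]
Combining these three identities gives $\Div^\calf H=\|b\|^2-\|T\|^2+\Sc_{\,\rm mix}$, which rearranges to (\ref{E-RicNs}) after substituting $\|b\|^2=n\beta_D+g(H,H)/n$ from the definition of $\beta_D$. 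The same identity follows algebraically from (\ref{eq-ran}): since $H\in D_\calf$, one has $\sum_i g(\nabla_{e_i}H,e_i)=-g(H,\sum_i b(e_i,e_i))=-|H|^2$, hence $\Div H=\Div^\calf H-|H|^2$, and (\ref{E-RicNs}) drops out by elimination.

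The main obstacle is the derivation of (\ref{E-riccati}) itself: one must carefully track the covariant derivative of the orthogonal projection $P^\bot:TM\to D$ (which is not parallel in general) and invoke the totally geodesic hypothesis in several places to annihilate unwanted $D$- or $D_\calf$-components of the various covariant derivatives involved. Once (\ref{E-riccati}) is established, the tracing argument that produces (\ref{E-RicNs}) is essentially linear algebra.
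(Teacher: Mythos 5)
Your proposal is correct, and for the identity the paper actually proves, (\ref{E-RicNs}), it follows essentially the same route: trace the Riccati equation over $D$, sum over a leaf frame $\{\eps_\alpha\}$, and use $\tr C_N=\tr A_N=g(H,N)$ together with $\|b\|^2=n\,\beta_D+g(H,H)/n$. The two cosmetic differences are these. First, the paper does not prove (\ref{E-riccati}) at all --- it cites \cite{rov-m} --- whereas you sketch a direct derivation along leaf geodesics; that sketch is the standard one and is fine (modulo the curvature sign convention, which the paper never fixes, so I will not press the point). Second, before tracing, the paper splits the Riccati equation into its symmetric and skew-symmetric parts via $C_N=A_N+T_N^\sharp$, obtaining the pair $\nabla_N A_N=A_N^2+(T_N^\sharp)^2+R_N$ and $\nabla_N T_N^\sharp=A_N T_N^\sharp+T_N^\sharp A_N$ (the second of which it reuses later, e.g.\ in the proof of Corollary~\ref{C-01}), and then traces only the symmetric part; you instead trace $C_N^2$ directly and observe that the cross term $\tr(A_N T_N^\sharp)$ dies because it is the trace of a symmetric operator against a skew one. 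These give identical outcomes: $\tr(C_N^2)=\tr(A_N^2)+\tr((T_N^\sharp)^2)$ either way. Your handling of the trace of the derivative term via $(\nabla_N C)(N)$ is equivalent to the paper's observation that $N(\tr A_N)=\tr(\nabla_N A_N)$ because $\nabla_N e_i\in D$ for a totally geodesic foliation, and your closing cross-check against (\ref{eq-ran}) using $\Div H=\Div^\calf H-|H|^2$ is a nice confirmation the paper itself only records later, in Remark~\ref{R-intbeta}.
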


\begin{proof} For Riccati equation (\ref{E-riccati}) see \cite{rov-m}.
 Substituting $C_N=A_N+T_N^\sharp$ into (\ref{E-riccati}) and taking the symmetric and skew-symmetric parts, yield
 a pair of equations
\begin{equation}\label{E-nablaT-1}
 \nabla_N A_N = A_N^2 +(T_N^\sharp)^2 +R_N,\qquad
 \nabla_N T_N^\sharp = A_N\,T_N^\sharp +T_N^\sharp A_N.
\end{equation}
By (\ref{E-nablaT-1})$_2$, we also have
\begin{equation}\label{E-nablaT-1b}
 -N(\|T\|^2) =2\tr(T_N^\sharp(\nabla_N\,T_N^\sharp)) =4\tr(A_N(T_N^\sharp)^2).
\end{equation}
Let $e_i$ be a local orthonormal frame of $D$. Since the leaves are totally geodesic, we have
$\nabla_N e_i\in D$ for any vector $N\in D_\calf$.
Then, using $g(A_N(e_i),\nabla_N e_i)=0$, we find
\begin{equation*}
 N(\tr A_N)
 =\sum\nolimits_i\big[\,g((\nabla_N A_N)(e_i),e_i) +g(A_N(\nabla_N e_i),e_i) +g(A_N(e_i),\nabla_N e_i)\big] = \tr(\nabla_N A_N).
\end{equation*}
Hence, the contraction of (\ref{E-nablaT-1})$_1$ over $D$ yields the formula
\begin{equation}\label{E-nablaT-2}
 N(\tr A_N) = \tr(A_N^2)+\tr((T^\sharp_N)^2)+\sum\nolimits_{j=1}^n K(e_j, N)
\end{equation}
for any unit vector $N\in D_\calf$. Note that $\tr A_N=g(H,N)$. We have
\begin{equation*}
 \sum\limits_{\alpha=1}^p \eps_\alpha (\tr A_{\eps_\alpha})=\Div^\calf H,\quad
 \sum\limits_{\alpha=1}^p\tr((T^\sharp_{\eps_\alpha})^2)=-\|T\|^2,\quad
 \sum\limits_{\alpha=1}^p\tr(A_{\eps_\alpha}^2)=\|b\,\|^2 = n\,\beta_D +\frac1n\,g(H,H).
\end{equation*}
Hence, the contraction of (\ref{E-nablaT-2}) over $D_\calf$ yields (\ref{E-RicNs}).
\end{proof}

\begin{remark}\label{R-intbeta}\rm
By the Divergence Theorem, from (\ref{E-RicNs}) and $\Div H = \Div^\calf H-g(H, H)$ we obtain
\begin{equation*}
 n\int_M\beta_D\,d\vol =\big(1-\frac1n\,\big)\int g(H, H)\,d\vol-\int(\Sc_{\,\rm mix}  -\|T\|^2)\,d\vol
 \ge -\int \Sc_{\,\rm mix} \,d\vol
\end{equation*}
on a closed $M$. Hence the inequality $\Sc_{\,\rm mix} <0$ yields that $\beta_D$ is somewhere positive.
\end{remark}

\subsection{$D$-truncated families of metrics}
\label{subsec:tvarb}

Since the difference of two connections is a tensor, $\dt\nabla^t$ is a $(1,2)$-tensor on $(M,g_t)$.
Differentiating the known formula for the {Levi-Civita connection}
with respect to $t$ yields
\begin{equation}\label{eq2}
 2\,g_t((\dt\nabla^t)(X, Y), Z)=(\nabla^t_X S)(Y,Z)+(\nabla^t_Y S)(X,Z)-(\nabla^t_Z S)(X,Y)
\end{equation}
for all $t$-independent vector fields $X,Y,Z\in\Gamma(TM)$, see~\cite{rw-m}.

\begin{lemma}
\label{L-nablaNN}
$D$-truncated variations of metrics preserve the property ``$\calf$ is totally geodesic".
\end{lemma}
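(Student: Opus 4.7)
The plan is to show that the second fundamental form of the leaves, $h^t(N_1,N_2) := (\nabla^t_{N_1} N_2)^\bot$ for $N_1,N_2 \in D_\calf$, satisfies a pointwise linear ODE in $t$ with zero initial data; totally geodesic is equivalent to $h^t \equiv 0$, so uniqueness will finish the job.

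First I would observe that because $S := \dt g_t$ is $D$-truncated, $S(N,Y) = 0$ for every $N \in D_\calf$ and $Y \in TM$. Integrating $\dt g_t = S$ against a pair $(N,X) \in D_\calf \times D$ yields $g_t(N,X) = g_0(N,X) = 0$, hence $D_\calf \perp_{g_t} D$ for every $t$. In particular, the orthogonal projection $(\,\cdot\,)^\bot:TM\to D$ is independent of $t$, and therefore $\dt h^t(N_1,N_2) = \bigl((\dt\nabla^t)(N_1,N_2)\bigr)^\bot$.

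Next, I would apply (\ref{eq2}) with $X=N_1$, $Y=N_2$, $Z \in D$. Expanding each covariant derivative of $S$ by Leibniz and using that $S$ vanishes as soon as a $D_\calf$-vector appears in either slot, the only contributions that survive come from $\nabla^t_{N_i} N_j$: its $D_\calf$-part is killed by $S$, while its $\bot$-part, namely $h^t(N_i,N_j)$, produces the term $-S(h^t(N_i,N_j),Z)$. Symmetry of $h^t$ (which follows from $[N_1,N_2] \in D_\calf$, i.e.\ Frobenius for the integrable distribution $D_\calf$) collapses the two such terms, while $(\nabla^t_Z S)(N_1,N_2)$ vanishes outright since $S(N_1,\cdot)=S(\cdot,N_2)=0$. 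The outcome is
\[
 g_t\bigl((\dt\nabla^t)(N_1,N_2),Z\bigr) \;=\; -\,S\bigl(h^t(N_1,N_2),\,Z\bigr), \qquad Z \in D.
\]

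Finally, since $S|_{D\times D}$ is a symmetric bilinear form, this reads pointwise as $\dt h^t(N_1,N_2)(x) = -\mathcal{A}^t_x\, h^t(N_1,N_2)(x)$ for a $t$-dependent endomorphism $\mathcal{A}^t_x$ of the finite-dimensional fiber $D_x$; this is a linear first-order ODE in $t$. The hypothesis $h^0 \equiv 0$ together with uniqueness for linear ODEs forces $h^t \equiv 0$ throughout the interval of existence. The mild subtlety, and really the only place $D$-truncation is used in full, is to keep the splitting $TM = D_\calf \oplus D$ literally $t$-independent so that the evolution of $h^t$ reduces to a pointwise ODE rather than a genuine PDE along the leaves; once that is secured, everything else is bookkeeping.
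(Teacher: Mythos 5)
Your proposal is correct and follows essentially the same route as the paper: both compute the $t$-evolution of the leaves' second fundamental form from the variation formula (\ref{eq2}), use $D$-truncation of $S$ to reduce the right-hand side to $-S(h^t(N_1,N_2),\cdot)$, and conclude $h^t\equiv0$ by uniqueness for the resulting linear ODE. The only differences are cosmetic — you work with the unsymmetrized $(\nabla^t_{N_1}N_2)^\bot$ (equivalent by integrability of $D_\calf$) and you make explicit the $t$-independence of the splitting $TM=D_\calf\oplus D$, a point the paper's proof uses implicitly.
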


\begin{proof}
Let $g_t\ (t\ge0)$ be a family of metrics on a foliation $(M,\calf)$ such that the tensor $S_t=\dt g_t$
is $D$-truncated. We claim that the second fundamental tensor of $\calf$ is evolved as
 $\dt {b^\bot} = -S^\sharp\circ\,{b^\bot}$.

Using (\ref{eq2}) and that the tensor $S$ is $D$-truncated, we find for $X\in D$ and $\xi,\eta\in D_\calf$,
\begin{eqnarray*}
  2\,g_t(\dt{b^\bot}(\xi,\eta),X)\eq g_t(\dt(\nabla^t_{\xi}\,\eta)+\dt(\nabla^t_{\eta}\,\xi),\ X)\\
 \eq(\nabla^t_{\xi} S)(X,\eta)+(\nabla^t_{\eta} S)(X,\xi)-(\nabla^t_X S)(\xi,\eta)
  =-2\,S({b^\bot}(\xi,\eta), X).
\end{eqnarray*}
From this the claim follows. By the theory of ODEs, if $b^\bot=0$ at $t=0$ then $b^\bot=0$ for all $t\ge0$.
\end{proof}

Let a family of metrics $g_t$ ($0\le t<\eps$) on $(M,\calf)$ satisfy the equality
\begin{equation}\label{E-sgeneral}
 \dt g=s(g)\,\hat g,
\end{equation}
where $s(g)$ can be considered as a $t$-dependent function on $M$.
Notice that the volume form $\vol_t$ of $g_t$ is evolved as $({d}/{dt})\vol_t=(n/2)\,s(g_t)\vol_t$,
see~\cite{rw-m}.

The proof of the next lemma is based on (\ref{eq2}) with $S(g)=s(g)\,\hat g$.

\begin{lemma}[see \cite{rw-m} and \cite{rovwol}]\label{L-btAt2}
For a totally geodesic foliation with (\ref{E-sgeneral}) and $N\in D_\calf$, we have
\begin{eqnarray}\label{Es-S-b}
 \dt b\eq s\,b -\frac12\,\hat g\,\nabla^\calf s, \qquad \dt T= 0, \\
\label{Es-S-A}
 \dt A_N\eq -\frac12\,N(s)\,\widehat\id,\qquad
 \dt T^\sharp_N = -s\,T^\sharp_N,\qquad
 \dt C_N = -\frac12\,N(s)\,\widehat\id -s\,T^\sharp_N,\\
\label{E-nablaNNt2s}
 \dt(\|T\|^2) \eq -2\,s\,\|T\|^2,\qquad
 \dt H = -\frac n2\,\nabla^\calf s,\qquad
\dt (\Div^\calf H) = -\frac{n}2\,\Delta_\calf s.
\end{eqnarray}
\end{lemma}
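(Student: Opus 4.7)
The plan is to differentiate each quantity from its definition, feeding in the general variation formula (\ref{eq2}) specialized to $S = s\,\hat g$, and to exploit two structural consequences of $D$-truncation. First, the splitting $TM = D \oplus D_\calf$ remains $g_t$-orthogonal, so the projections $(\cdot)^\calf$ and $(\cdot)^\bot$ are $t$-independent as linear maps. Second, $g_t|_{D_\calf} \equiv g_0|_{D_\calf}$ stays frozen, so every operator built from derivatives along the leaves (in particular $\nabla^\calf$, $\Div^\calf$, $\Delta_\calf$) commutes with $\dt$.

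The key step is the computation of $\dt b$. I take $X,Y \in D$ and $Z = N \in D_\calf$ in (\ref{eq2}). Since $S(\cdot,N) = 0$, only the ``inner'' corrections survive: writing $(\nabla_X N)^\bot = -A_N X - T^\sharp_N X$ and using the symmetry of $b$ and skew-symmetry of $T$, the sum $(\nabla_X S)(Y,N) + (\nabla_Y S)(X,N)$ collapses to $2s\,g(b(X,Y), N)$. For the third term $(\nabla_N S)(X,Y) = N(s)\,g(X,Y)$, the totally geodesic hypothesis enters: it guarantees $\nabla_N X, \nabla_N Y \in D$, so the $s$-linear corrections against $N(g(X,Y))$ cancel exactly. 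Combining, and using that $(\dt\nabla)(X,Y)$ is symmetric, yields $\dt b = s\,b - \tfrac12 \hat g\,\nabla^\calf s$. The identity $\dt T = 0$ is then immediate, since $T$ is built from the Lie bracket and the $t$-independent projection onto $D_\calf$.

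With $\dt b$ and $\dt T$ in hand, the operator-valued derivatives follow by differentiating the defining identities $g_t(A_N X, Y) = g_t(b(X,Y), N)$ and $g_t(T^\sharp_N X, Y) = g_t(T(X,Y), N)$ at the current time. The $\dt g_t$-variation contributes $s\,g_t(\cdot,\cdot)$ on the left but vanishes on the right (one slot lies in $D_\calf$). The $s\,g_t(A_N X, Y)$ terms then cancel against the $s\,b$ part of $\dt b$, leaving $\dt A_N = -\tfrac12 N(s)\,\widehat\id$; for $T^\sharp_N$ the only surviving piece is $-s\,T^\sharp_N$; their sum recovers $\dt C_N$.

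The scalar identities reduce to bookkeeping. From $\dt g^{ij} = -s\,g^{ij}$ on $D$-indices and $\dt g_{\alpha\beta} = 0$ on $D_\calf$-indices, contracting $\dt b$ with $g^{-1}|_D$ gives $\dt H = -\tfrac n2 \nabla^\calf s$ (the $s\,H$ pieces cancel), and contracting the $T \otimes T$ expression of $\|T\|^2$ yields $\dt\|T\|^2 = -2s\,\|T\|^2$. Since $\Div^\calf$ commutes with $\dt$, the evolution of $\Div^\calf H$ is immediate. The only genuinely nontrivial step is handling $(\nabla_N S)(X,Y)$ in the $\dt b$ computation, where the totally geodesic hypothesis is essential; the remaining identities follow algebraically.
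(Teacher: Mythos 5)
Your proposal is correct and follows exactly the route the paper indicates (the paper cites \cite{rw-m}, \cite{rovwol} and only remarks that the proof is ``based on (\ref{eq2}) with $S(g)=s(g)\,\hat g$''): you differentiate the defining identities, feed in (\ref{eq2}), and use that $\hat g(\cdot,N)=0$ together with the totally geodesic hypothesis (which forces $\nabla_N X\in D$, so $(\nabla_N S)(X,Y)=N(s)\,g(X,Y)$). The structural observations you start from --- $D$ stays $g_t$-orthogonal to $D_\calf$, $g_t|_{D_\calf}$ is frozen, and the leafwise operators commute with $\dt$ --- are precisely the facts the paper records before stating the lemma, so no gap.
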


\begin{remark}\rm
For any function $f\in C^1$ and $N\in D_\calf$, using $(\dt g)(\cdot, N)=0$, we find
\[
 g(\nabla^\calf(\dt f), N) = N(\dt f)=\dt N(f)=\dt g(\nabla^\calf f, N) = g(\dt(\nabla^\calf f), N).
\]
Lemma~\ref{L-btAt2} and $\nabla^\calf (\dt\log\|T\|)=-\nabla^\calf s=\frac 2n\,\dt H$
yield the following conservation law for the evolution (\ref{E-sgeneral}) on the domain $U_T$:
 $\dt\big(2\,H -n\nabla^\calf\log\|T\|\big)=0$.
\end{remark}

\begin{proposition}[\textit{Conservation of ``non-umbilicity"}]\label{R-consumb}
Let $g_t\ (t\ge0)$ be a $D$-conformal family of
metrics on a foliated manifold $(M,\calf)$. Then the function $\beta_D$ doesn't depend on $t$.
\end{proposition}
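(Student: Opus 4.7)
The plan is to verify $\dt\beta_D=0$ by computing $\dt\|b\|^2$ and $\dt|H|^2$ separately and then observing the exact cancellation. All the required first-order information is already packaged in Lemma~\ref{L-btAt2}: $\dt b = s\,b - \frac12\,\hat g\,\nabla^\calf s$ and $\dt H = -\frac{n}{2}\,\nabla^\calf s$. The key structural fact about (\ref{E-sgeneral}) is that $\dt g$ is purely tangent to $D$ and conformal there, so $\dt g|_D = s\,g|_D$ (whence $\dt g^{ij}=-s\,g^{ij}$ on $D$), while $\dt g$ vanishes on $D_\calf\times TM$.

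The $|H|^2$ side is immediate: since $H\in D_\calf$ and the metric there is $t$-independent,
\[
 \dt|H|^2 \,=\, 2\,g(\dt H,H) \,=\, -n\,g(\nabla^\calf s,H) \,=\, -n\,H(s).
\]
For $\|b\|^2$ I would write $\|b\|^2 = g^{ij}g^{kl}g_{\gamma\delta}\,b^\gamma_{ik}\,b^\delta_{jl}$ in a local adapted frame (Latin indices on $D$, Greek on $D_\calf$) and apply the product rule term by term. The two $g^{-1}|_D$ factors contribute $-2s\|b\|^2$; the $g|_{D_\calf}$ factor contributes $0$; and the two $b$-factors contribute $+2s\|b\|^2$ from the $s\,b$ part of $\dt b$, plus a cross term from the $\hat g\,\nabla^\calf s$ piece. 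The contraction $g^{ij}g^{kl}g_{ik}=g^{jl}$ together with $g^{jl}b^\delta_{jl}=H^\delta$ collapses this cross term to $-g(\nabla^\calf s,H)=-H(s)$. The $\pm 2s\|b\|^2$ pieces cancel, leaving $\dt\|b\|^2 = -H(s)$.

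Combining, $\dt(n\|b\|^2-|H|^2) = -n\,H(s)+n\,H(s) = 0$, hence $\dt\beta_D=0$. The only real subtlety is the exact cancellation of the conformal rescaling $-2s\|b\|^2$ against the $s\,b$ term in $\dt b$, which is a structural consequence of the $D$-conformal nature of the flow; the remaining inhomogeneous $\hat g\,\nabla^\calf s$ piece of $\dt b$ is precisely what matches the $\dt H$ contribution up to the factor $n$, producing the clean identity. Nothing else is needed beyond careful bookkeeping of which indices live in $D$ versus $D_\calf$.
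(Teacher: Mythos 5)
Your proof is correct and follows essentially the same route as the paper: both compute $\dt\|b\|^2$ and $\dt|H|^2$ directly from Lemma~\ref{L-btAt2} and observe the cancellation in $n\|b\|^2-|H|^2$. The only (cosmetic) difference is that the paper writes $\|b\|^2=\sum_\alpha\tr(A_{\eps_\alpha}^2)$ and uses $\dt A_N=-\frac12 N(s)\,\widehat{\id}$, so the conformal rescaling terms $\pm2s\|b\|^2$ never appear, whereas your index bookkeeping with $\dt b$ and $\dt g^{ij}$ reproduces the same identity $\dt\|b\|^2=-H(s)$.
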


\begin{proof}
Using Lemma~\ref{L-btAt2}, we calculate
\begin{eqnarray*}
 \dt\,\|b\,\|^2_{g} \eq \dt\sum\nolimits_\alpha\tr(A_{\eps_\alpha}^2)
 =2\sum\nolimits_\alpha\tr(A_{\eps_\alpha}\dt A_{\eps_\alpha})
 =-\sum\nolimits_\alpha \eps_\alpha(s)\tr A_{\eps_\alpha} = -g(\nabla s, H),\\
 \dt g(H,H) \eq s\,\hat g(H,H) +2g(\dt H, H) = -n\,g(\nabla s, H).
\end{eqnarray*}
Hence, $n\,\dt\beta_D=\dt\,\|b\,\|^2_{g} -\frac1n\,\dt g(H,H)=0$.
\end{proof}

If one has a solution $u_0$ to a given non-linear PDE, it is possible to linearise the equation by considering a smooth family $u=u(t)$ of solutions with a variation $v=\dt u_{|\,t=0}$.
Differentiation the PDE by $t$, yields a linear PDE in terms of $v$.
The next lemma concerns the linearisation of the differential operator
$g\to-2\,(\Sc_{\,\rm mix} -\|T\|^2-\Phi)\,\hat g$,
see~(\ref{E-GF-Ricmix-mu}).

\begin{lemma}\label{L-dtRStrunc}
For a totally geodesic foliation with (\ref{E-sgeneral}),
the mixed scalar curvature is evolved by
\begin{equation}\label{E-dtR-s}
 \dt(\Sc_{\,\rm mix} -\|T\|^2)= -\frac n2\,\Delta-\calf s +\nabla_H\,s.
\end{equation}
\end{lemma}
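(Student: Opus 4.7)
The plan is to differentiate the identity (\ref{E-RicNs}) of Lemma~\ref{L-CC-riccati} with respect to $t$, and then plug in the evolution formulas from Lemma~\ref{L-btAt2} together with the conservation law of Proposition~\ref{R-consumb}. Since the equality
$$\Sc_{\,\rm mix} - \|T\|^2 = \Div^\calf H - \tfrac{1}{n}\,g(H,H) - n\,\beta_D$$
holds for every metric in the $D$-conformal family (each $g_t$ still has $\calf$ totally geodesic by Lemma~\ref{L-nablaNN}), the task reduces to computing $\partial_t$ of each of the three terms on the right.

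For the first term, Lemma~\ref{L-btAt2} gives $\partial_t(\Div^\calf H) = -\tfrac{n}{2}\Delta_\calf s$ directly. For the third term, Proposition~\ref{R-consumb} gives $\partial_t \beta_D = 0$. The only term requiring attention is $g(H,H)$. Here I would use $\partial_t g(H,H) = (\partial_t g)(H,H) + 2\,g(\partial_t H, H)$. Since $H\in D_\calf$ and $\hat g$ vanishes on $D_\calf$, the first piece $(\partial_t g)(H,H) = s\,\hat g(H,H)$ is zero. For the second piece, Lemma~\ref{L-btAt2} yields $\partial_t H = -\tfrac{n}{2}\nabla^\calf s$, so
$$2\,g(\partial_t H, H) = -n\,g(\nabla^\calf s, H) = -n\,H(s) = -n\,\nabla_H s,$$
using that both $\nabla^\calf s$ and $H$ lie in $D_\calf$.

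Combining the three contributions,
$$\partial_t(\Sc_{\,\rm mix} - \|T\|^2) = -\tfrac{n}{2}\Delta_\calf s - \tfrac{1}{n}(-n\,\nabla_H s) - 0 = -\tfrac{n}{2}\Delta_\calf s + \nabla_H s,$$
which is precisely (\ref{E-dtR-s}).

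There is no real obstacle; the identity is essentially a bookkeeping consequence of the three earlier results. The one point one has to get right is the placement of $H$ in $D_\calf$ (so that $\hat g(H,H)=0$ and $g(\nabla^\calf s,H)=\nabla_H s$), which is what makes the coefficient $+1$ appear in front of $\nabla_H s$ after dividing by $n$.
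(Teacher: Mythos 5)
Your proof is correct and follows the same route as the paper: differentiate the identity (\ref{E-RicNs}), invoke $\dt\beta_D=0$ from Proposition~\ref{R-consumb} and $\hat g(H,H)=0$, and then substitute $\dt(\Div^\calf H)=-\frac n2\Delta_\calf s$ and $\dt H=-\frac n2\nabla^\calf s$ from Lemma~\ref{L-btAt2}. The bookkeeping, including the sign and coefficient of the $\nabla_H s$ term, matches the paper's computation exactly.
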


\begin{proof} Differentiating (\ref{E-RicNs}) by $t$, and using $\dt\beta_D=0$ of
Proposition~\ref{R-consumb} and $\hat g(H, H)=0$, we obtain
\begin{equation*}
 \dt(\Sc_{\,\rm mix}  -\|T\|^2) = \dt(\Div^\calf H) -\frac2n\,g(\dt H, H).
\end{equation*}
By the above, using Lemma~\ref{L-btAt2}, we rewrite the above equation as (\ref{E-dtR-s}).
\end{proof}

The following proposition shows that (\ref{E-sgeneral}) preserves certain geometric properties
of~$D$.

\begin{proposition}\label{P-08b}
Let $\calf$ be a totally geodesic foliation of a Riemannian manifold $(M, g_0)$,
and $g_t$ be a family of Riemannian metrics (\ref{E-sgeneral}) on $M$.
If $D$ is either totally umbilical, or harmonic, or totally geodesic with respect to $g_0$ then $D$ is the same for all $g_t$.
\end{proposition}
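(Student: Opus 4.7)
The plan is to treat the three invariance claims by a common scheme: for each property, identify a tensorial ``defect'' that vanishes precisely when the property holds, compute its $t$-derivative using Lemma~\ref{L-btAt2}, and recognize a pointwise linear ODE in $t$ with zero as a fixed point; uniqueness of ODE solutions then gives preservation. As a preliminary observation, $\dt g(\xi,X) = s\,\hat g(\xi,X) = 0$ for every $\xi \in D_\calf$ and $X \in TM$, so both $D_\calf$ and its $g_t$-orthogonal complement $D$ are $t$-independent, and $\dt \hat g = s\,\hat g$ on $D\times D$; this lets me differentiate $\hat g$ cleanly in what follows.

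For the totally-umbilical case, I would set $\Psi = b - \tfrac{1}{n}H\,\hat g$, a $D_\calf$-valued symmetric bilinear form on $D$ vanishing iff $D$ is totally umbilical. Substituting $\dt b = s b - \tfrac{1}{2}\hat g\,\nabla^\calf s$, $\dt H = -\tfrac{n}{2}\nabla^\calf s$, and $\dt \hat g = s\,\hat g$ from Lemma~\ref{L-btAt2} into $\dt \Psi = \dt b - \tfrac{1}{n}(\dt H)\hat g - \tfrac{1}{n} H\,\dt \hat g$, the two $\nabla^\calf s$ contributions cancel exactly and the remaining $s$-linear pieces reassemble into $\dt \Psi = s\,\Psi$. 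This is a pointwise scalar linear ODE in $t$, so $\Psi(0) = 0$ forces $\Psi(t) \equiv 0$, i.e.\ $D$ remains totally umbilical.

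For the harmonic case I would use $\dt H = -\tfrac{n}{2}\nabla^\calf s$ together with the observation that on the invariant set $\{H = 0\}$ the speed $s$ is leaf-constant: indeed, $H = 0$ reduces (\ref{E-RicNs}) to $\Sc_{\,\rm mix} - \|T\|^2 = -n\,\beta_D$, and combining this with the $t$-invariance of $\beta_D$ from Proposition~\ref{R-consumb} and the leaf-constancy of $\Phi$ in (\ref{E-GF-Ricmix-mu}) forces $\nabla^\calf s = 0$ on $\{H = 0\}$, hence $\dt H = 0$ there. The totally-geodesic case is analogous but stronger: $b \equiv 0$ implies $H = 0$ and $\beta_D = 0$, so (\ref{E-RicNs}) gives $\Sc_{\,\rm mix} - \|T\|^2 = 0$ and $s$ drops to $2\Phi$, which is leaf-constant; then $\dt b = s\,b$ is a linear ODE with $b = 0$ as fixed point.

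The main obstacle is proving the vanishing of the $\nabla^\calf s$ forcing in the last two cases. The umbilicity computation is immediate because the gradient contributions coming from $\dt b$ and from $\dt H$ enter with equal magnitudes and opposite signs and so cancel by algebra alone; by contrast, the harmonic and totally-geodesic cases rely on the geometric identity (\ref{E-RicNs}) together with the invariance of $\beta_D$ under the flow to reduce $s$ to a leaf-constant function on the respective invariant sets. Once that gradient forcing is eliminated, each case becomes a first-order linear pointwise ODE with zero as a fixed point, and uniqueness closes the argument.
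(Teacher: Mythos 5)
Your totally umbilical case is correct and is in substance the paper's own argument: the paper applies ODE uniqueness to the evolution equation $\dt b = s\,b-\frac12\,\hat g\,\nabla^\calf s$ of Lemma~\ref{L-btAt2} to propagate the ansatz $b_t=\tilde H_t\,\hat g_t$, which is exactly your computation $\dt\Psi=s\,\Psi$ for the trace-free defect $\Psi=b-\frac1n H\hat g$ in different packaging. (The paper proves only this case in detail and declares the other two ``similar''.)

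The harmonic and totally geodesic cases contain a genuine gap. First, Proposition~\ref{P-08b} is stated for the general family (\ref{E-sgeneral}), where $s(g)$ is an arbitrary $t$-dependent function on $M$; you are not entitled to substitute the specific speed $s=-2\,(\Sc_{\,\rm mix}-\|T\|^2-\Phi)$ of (\ref{E-GF-Ricmix-mu}), so the appeal to (\ref{E-RicNs}) and to the leaf-constancy of $\Phi$ is outside the hypotheses. Second, even granting that substitution, your key claim that $H\equiv0$ forces $\nabla^\calf s=0$ does not follow: with $H\equiv0$, (\ref{E-RicNs}) gives $s=2\,(n\,\beta_D+\Phi)$, and Proposition~\ref{R-consumb} only asserts that $\beta_D$ is independent of $t$, not that it is constant along the leaves; in general $\nabla^\calf\beta_D\ne0$ (which is precisely why Corollary~\ref{C-01}(a) must assume $\nabla^\calf\beta_D=0$ as an extra hypothesis). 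Hence the forcing term $-\frac n2\,\nabla^\calf s$ in $\dt H$ does not vanish and your argument does not preserve harmonicity. The totally geodesic case fares somewhat better, since $b\equiv0$ does give $\beta_D=0$ and $\Sc_{\,\rm mix}-\|T\|^2=0$, but it still relies on the specific flow, and ``$\dt b=s\,b$ is a linear ODE'' is only legitimate once $\nabla^\calf s=0$ is known along the entire solution, which requires an invariant-set/uniqueness argument for the coupled system rather than an evaluation at $t=0$. To be fair, the obstruction you ran into is real: the inhomogeneous term $-\frac12\,\hat g\,\nabla^\calf s$ is exactly what the paper's one-line ``the proof of other cases is similar'' conceals, and for an arbitrary speed $s$ in (\ref{E-sgeneral}) the harmonic and totally geodesic properties are preserved only when $s$ is leaf-constant.
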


\begin{proof} If $D$ is $g_0$-umbilical then we have $b=H\,\hat g$ at $t=0$,
where $H$ is the mean curvature vector of $D$.
Applying to (\ref{Es-S-b})$_1$ the theorem on existence/uniqueness of a solution of ODEs,
we conclude that $b_t=\tilde H_t\,\hat g_t$ for all $t$, for some $\tilde H_t\in\Gamma(D_\calf)$.
Tracing this, we see that $\tilde H_t$ is the mean curvature vector of $D$ w.\,r.\,t. $g_t$,
hence $D$ is umbilical for any $g_t$.
The proof of other cases is similar.
\end{proof}

Assume that $\int_0^\infty u_0(t)\,dt<\infty$, where $u_0(t)=\sup_M |s(g_t)|_{\,g(t)}$.
Then the metrics (\ref{E-sgeneral}) are uniformly equivalent, i.e., there exists a constant $c>0$ such that
$c^{-1}\|X\|^2_{g_0} \le \|X\|^2_{g_t}\le c\,\|X\|^2_{g_0}$
for all points $(x,t)\in M\times[0,\infty)$ and all vectors $X\in T_x M$.

We will use the following condition for proving convergence of evolving metrics, see \cite{ck1}.

\begin{proposition}\label{P-converge}
$\pi:M\to B$ be a fiber bundle with compact totally geodesic fibers of a closed Riemannian manifold $(M,g_0)$.
Suppose that $g_t\ (t\ge0)$ is the solution of (\ref{E-sgeneral}).
 Define functions $u_j(t)=\sup_M |(\nabla^{t,\calf})^j s(g_t)|_{g(t)}$ and assume that $\int_0^\infty u_j(t)\,dt<\infty$ for all $j\ge0$.
 Then, as $t\to\infty$, the metrics $g_t$ converge in $C^\infty$-topology to a smooth Riemannian metric.
\end{proposition}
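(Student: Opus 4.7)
The plan is to adapt the standard convergence criterion for geometric flows (cf.\ \cite{ck1}) to the $D$-conformal setting, exploiting that $g_t$ evolves only on the $D\times D$-block while remaining equal to $g_0$ on $D_\calf$. First, I would establish uniform equivalence of the metrics from the $j=0$ hypothesis: at each $x\in M$ and every $X\in D_x$, the quantity $g_t(X,X)$ satisfies the pointwise ODE $\dt g_t(X,X)=s(g_t)(x)\,g_t(X,X)$, hence
\begin{equation*}
 g_t(X,X)=g_0(X,X)\exp\Bigl(\int_0^t s(g_\tau)(x)\,d\tau\Bigr),
\end{equation*}
and $|s(g_\tau)|\le u_0(\tau)$ together with $u_0\in L^1([0,\infty))$ yields a constant $c>0$ with $c^{-1}g_0\le g_t\le c\,g_0$ uniformly on $M\times[0,\infty)$.

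Next, I would introduce the conformal potential $\sigma_t$ by $g_t|_D=e^{2\sigma_t}g_0|_D$, so that $\dt\sigma_t=\tfrac12 s(g_t)$ and $|\dt\sigma_t|\le\tfrac12 u_0(t)$. The family $\{\sigma_t\}$ is then uniformly Cauchy on $M$, producing a continuous limit $\sigma_\infty$ and $C^0$-convergence $g_t\to g_\infty:=e^{2\sigma_\infty}g_0|_D\oplus g_0|_{D_\calf}$. Because the fibers are totally geodesic and carry the $t$-independent induced metric $g_0|_{D_\calf}$, the leafwise operator $(\nabla^{t,\calf})^j$ applied to any scalar is actually $t$-independent (it reduces to the leaf-intrinsic Levi-Civita iterated derivative). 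Commuting $\dt$ with $(\nabla^\calf)^j$ and using Step~1 to pass between $g_t$- and $g_0$-norms yields
\begin{equation*}
 \bigl|(\nabla^\calf)^j\sigma_t-(\nabla^\calf)^j\sigma_{t'}\bigr|_{g_0}\le C_j\int_{t'}^{t} u_j(\tau)\,d\tau\,\longrightarrow\,0\quad (t,t'\to\infty),
\end{equation*}
so $\sigma_\infty$ is smooth along the leaves and $(\nabla^\calf)^j\sigma_t\to(\nabla^\calf)^j\sigma_\infty$ uniformly on $M$ for every $j$.

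Finally, to upgrade from leafwise $C^\infty$-convergence to $C^\infty$-convergence on $M$, I would exploit the fiber-bundle hypothesis. In a local trivialization $\pi^{-1}(U)\cong U\times F$, the short-time smoothness of Proposition~\ref{T-main-loc} together with the evolution equation allow one to bootstrap transverse (horizontal) regularity of $g_t$ on bounded time intervals; combined with the integral representation $g_\infty=g_0+\int_0^\infty s(g_\tau)\,\hat g_\tau\,d\tau$ and the uniform integrability of $s(g_\tau)$ (and its derivatives), an Arzel\`a--Ascoli argument provides $C^k_{\rm loc}$-subsequential limits of $\{g_t\}$ for every $k$. Uniqueness of the $C^0$-limit from Step~2 forces every such subsequential limit to equal $g_\infty$, so $g_t\to g_\infty$ in the $C^\infty$-topology.

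The main obstacle will be this last step: the hypothesis controls only \emph{leafwise} derivatives of $s$, whereas $C^\infty$-convergence on $M$ requires transverse regularity as well. The cleanest route is to differentiate the evolution equation in $D$-directions, exploit that $\dt g$ is $D$-truncated and pointwise first-order in $g$, and bootstrap transverse bounds from short-time parabolic smoothness. The fibration assumption (as opposed to a general foliation) is precisely what enables this bootstrap to proceed globally and uniformly on $M$.
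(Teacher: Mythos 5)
The paper does not actually prove Proposition~\ref{P-converge}: it is presented as a known convergence criterion and attributed to \cite{ck1}, the only supplied ingredient being the preceding remark that the $j=0$ hypothesis gives uniform equivalence of the metrics. Your Steps 1 and 2 correctly reproduce the leafwise part of that standard argument: the pointwise ODE $\dt g_t(X,X)=s(g_t)\,g_t(X,X)$ for $X\in D$ (note $D$ remains $g_t$-orthogonal to $D_\calf$ because $\hat g(N,\cdot)=0$), the resulting uniform equivalence $c^{-1}g_0\le g_t\le c\,g_0$, and the uniform Cauchy estimates for $(\nabla^\calf)^j\sigma_t$ based on the $t$-independence of the leafwise connection (which the paper itself records). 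Up to that point your argument is sound and is essentially the argument behind the citation, restricted to leaf directions.

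The genuine gap is exactly the one you flag in your last step, and your proposed repair does not close it. The hypotheses control only $(\nabla^{t,\calf})^j s(g_t)$, i.e.\ derivatives along the fibers; they give no bound, integrable in $t$ or otherwise, on transverse (horizontal) derivatives of $s(g_t)$. An Arzel\`a--Ascoli argument in a local trivialization needs uniform-in-$t$ bounds on one more transverse derivative than the convergence order sought, and nothing in the hypotheses supplies them: short-time parabolic smoothing via Proposition~\ref{T-main-loc} makes each $g_t$ smooth but says nothing about uniformity as $t\to\infty$, and differentiating the representation $g_\infty=g_0+\int_0^\infty s(g_\tau)\,\hat g_\tau\,d\tau$ in a transverse direction under the integral sign requires precisely the missing integrable bounds. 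From the stated hypotheses alone one obtains only $C^0$-convergence on $M$ together with leafwise $C^\infty$-convergence. To get genuine $C^\infty$-convergence on $M$ you must either strengthen the hypothesis to $\int_0^\infty\sup_M|(\nabla^{t})^j s(g_t)|_{g(t)}\,dt<\infty$ for the full covariant derivatives (which is the form in which the criterion appears in \cite{ck1}), or extract transverse bounds from the specific flow under consideration; neither is carried out in your proposal.
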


\subsection{Evolving of geometric quantities}
\label{subsec:evolv}

By Proposition~\ref{R-consumb}, the measure of non-umbilicity of $D$ (see Introduction),
is preserved by (\ref{E-GF-Ricmix-mu}).

From Lemmas~\ref{L-btAt2} and \ref{L-dtRStrunc} with $s=-2(\Sc_{\,\rm mix} -\|T\|^2-\Phi)$,
we obtain the following.

\begin{lemma}\label{L-dt_Cfin}
For a totally geodesic foliation with (\ref{E-GF-Ricmix-mu}), we have $\dt T=0$ and
\begin{eqnarray*}
 &&\dt b = -2\,(\Sc_{\,\rm mix} -\|T\|^2-\Phi)\,b +\hat g\,\nabla^\calf(\Sc_{\,\rm mix} -\|T\|^2),\quad
 \dt A_N = N(\Sc_{\,\rm mix} -\|T\|^2)\,\hat\id,\\
 &&\dt(\Sc_{\,\rm mix} -\|T\|^2) = n\,\Delta_\calf(\Sc_{\,\rm mix} -\|T\|^2)
 -2\,\nabla_H(\Sc_{\,\rm mix} -\|T\|^2),\\
 &&\dt(\|T\|^2) = 4\,(\Sc_{\,\rm mix} -\|T\|^2-\Phi)\,\|T\|^2.
\end{eqnarray*}
\end{lemma}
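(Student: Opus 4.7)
The plan is to derive everything by direct substitution into Lemmas~\ref{L-btAt2} and \ref{L-dtRStrunc}, which already handle the general $D$-conformal evolution $\dt g = s(g)\,\hat g$. For our flow (\ref{E-GF-Ricmix-mu}) we have
\[
 s(g) = -2\bigl(\Sc_{\,\rm mix} - \|T\|^2 - \Phi\bigr),
\]
so the whole task reduces to plugging this $s$ into the formulas already proved and cleaning up.

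The one point that needs verbal attention is the role of $\Phi$. Since $\Phi$ is assumed constant along the leaves of $\calf$, any leaf-tangent derivative annihilates it: $\nabla^\calf \Phi = 0$, $\Delta_\calf \Phi = 0$, and $N(\Phi)=0$ for $N\in D_\calf$. In particular, since $H\in\Gamma(D_\calf)$, we also have $\nabla_H\Phi=0$. Consequently
\[
 \nabla^\calf s = -2\,\nabla^\calf(\Sc_{\,\rm mix}-\|T\|^2), \qquad
 \Delta_\calf s = -2\,\Delta_\calf(\Sc_{\,\rm mix}-\|T\|^2), \qquad
 \nabla_H s = -2\,\nabla_H(\Sc_{\,\rm mix}-\|T\|^2).
\]

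First, $\dt T=0$ is immediate from (\ref{Es-S-b})$_2$. Next, inserting our $s$ into (\ref{Es-S-b})$_1$ gives $\dt b = -2(\Sc_{\,\rm mix}-\|T\|^2-\Phi)\,b + \hat g\,\nabla^\calf(\Sc_{\,\rm mix}-\|T\|^2)$, and (\ref{Es-S-A})$_1$ yields $\dt A_N = N(\Sc_{\,\rm mix}-\|T\|^2)\,\hat\id$ after the $-\tfrac12\cdot(-2)$ cancellation and using $N(\Phi)=0$. For the mixed scalar curvature, Lemma~\ref{L-dtRStrunc} gives $\dt(\Sc_{\,\rm mix}-\|T\|^2) = -\tfrac{n}{2}\Delta_\calf s + \nabla_H s$, which with the identities above becomes $n\,\Delta_\calf(\Sc_{\,\rm mix}-\|T\|^2) - 2\,\nabla_H(\Sc_{\,\rm mix}-\|T\|^2)$. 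Finally, $\dt\|T\|^2$ follows from (\ref{E-nablaNNt2s})$_1$: $\dt\|T\|^2 = -2s\|T\|^2 = 4(\Sc_{\,\rm mix}-\|T\|^2-\Phi)\|T\|^2$.

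There is essentially no obstacle here; the lemma is a bookkeeping corollary of Lemma~\ref{L-btAt2} and Lemma~\ref{L-dtRStrunc}. The only thing one must be careful not to forget is that $\Phi$ being leaf-constant is what keeps the right-hand sides expressible in terms of $\Sc_{\,\rm mix}-\|T\|^2$ alone (with no stray $\Phi$-derivatives) in the evolution equations for $b$, $A_N$ and $\Sc_{\,\rm mix}-\|T\|^2$, while $\Phi$ does reappear multiplicatively in the evolutions of $b$ and $\|T\|^2$ through the undifferentiated factor $s$.
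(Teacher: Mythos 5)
Your proposal is correct and follows exactly the paper's route: the paper obtains this lemma precisely by substituting $s=-2(\Sc_{\,\rm mix}-\|T\|^2-\Phi)$ into Lemmas~\ref{L-btAt2} and \ref{L-dtRStrunc}, and your observation that the leaf-constancy of $\Phi$ kills all its leafwise derivatives is the only point needing care. Your write-up is, if anything, more explicit than the paper's one-line derivation.
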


\begin{example}[\textbf{$n=1$}]\label{Ex-dt_rN}\rm
Consider a surface $(M^2, g)$ with a geodesic unit vector field $N$.
Let $k,\,K\in C^2(M)$ be the curvature of $N^\perp$-curves and the gaussian curvature of $M^2$, respectively.
We have
\[
 C(X)=k\cdot X,\qquad
 R_N(X)=K\cdot X
 \quad{\rm for}\qquad X\perp N.
\]
Take $\Phi=0$.
The equation (\ref{E-GF-Ricmix-mu}) takes the form (\ref{E-GF-KM2}).
By Lemma~\ref{L-dt_Cfin} we obtain the PDEs
\begin{equation}\label{E-Nlambda}
 \dt K=N(N(K)) -2\,k\,N(K),\qquad
 \dt k=N(K).
\end{equation}
For $n=1$, (\ref{E-riccati}) reads as the Riccati equation
\begin{equation}\label{En1-riccati}
 N(k)=k^2+K.
\end{equation}
Substituting $K$ from (\ref{En1-riccati}) into (\ref{E-Nlambda})$_2$,
we obtain the Burgers equation
\begin{equation}\label{E-kNN}
 \dt k=N(N(k))-N(k^2),
\end{equation}
it also follows from (\ref{Ec-ANtau1}) with $\beta_D=0$.
If the solution $k_t$ of (\ref{E-kNN}) is known, then by (\ref{En1-riccati}) we find $K_t=N(k_t)-k_t^2$.
Finally, we recover the metric by
 $\hat g_t=\hat g_0\exp\,(-2\!\int_0^t K_t\,dt)$.
\end{example}

\section{Proof of main results and applications}
\label{sec:egf}

\subsection{The behavior of the mean curvature vector $H$}
\label{sec:burgers}

Let $(F,g)$ be a Riemannian manifold, e.g., a leaf of $\calf$, or a fiber of $\pi:M\to B$.

\begin{example}\rm
 Consider the Burgers equation for a potential vector field $H$ on $(F,g)$
\[
 \partial_\tau H+a\,\nabla g(H,H) =\nu\,\nabla(\Div\,H)
\]
where $a\in\RR$ and $\nu>0$.
Using the scaling of independent variables $x=z\,\frac{a}{\nu}$ and $t=\tau\,\frac{a^2}{\nu}$,
the above equation reduces~to the normalized Burgers equation
\begin{equation}\label{E-Burgers}
 \dt H +\frac12\,\nabla g(H,H)=\nabla(\Div\,H).
\end{equation}
To show this, we compute
\[
 \partial_\tau H=(a^2/\nu)\,\dt H,\quad
 \nabla_z\,H=(a/\nu)\,\nabla_x\,H,\quad
 (\nabla\Div)_{z}\,H=(a^2/\nu^2)\,(\nabla\Div)_{x}\,H.
\]
Solutions of (\ref{E-Burgers}) correspond to solutions of
the homogeneous \textit{heat equation} on $(F, g)$,
\begin{equation}\label{E-heat}
 \dt u=\Delta\,u,
\end{equation}
using the well-known Cole-Hopf transformation $H=-2\,\nabla(\log u)$.
\end{example}

The \textit{forced (or inhomogeneous) Burgers equation}, see \cite{ks2001}, \cite{s2008},
has attached some attention as an analogue of the Navier-Stokes equations.
For a potential vector field $H$ and a function $f\in C^\infty(F)$,
it can be viewed as the following equation on $F$:
\begin{equation}\label{E-Burgers-in}
 \dt H +\frac12\,\nabla g(H,H)=(\nabla\Div)\,H -2\,\nabla f.
\end{equation}
Since the function $f$ is defined modulo a constant, one may assume $f\ge0$.

By the maximum principle, see \cite{ck1}, we have the following.

\begin{lemma}\label{L-u-max}
Let $f\in C^1(F)$ be an arbitrary function and $u_0\in C^2(F)$
on a closed Riemannian manifold $(F,g)$.
Then the Cauchy's problem for the heat equation with a linear reaction term
\begin{equation}\label{E-heat-in}
 \dt u =\Delta u +f\,u,\qquad
 u(\cdot,0)=u_0,
\end{equation}
has a unique solution $u(\cdot,t)\ (t\ge0)$,
and if $u(\cdot,0)\ge c$ for some $c\in\RR$ then $u(\cdot,t)\ge c$ for all~$t$.
\end{lemma}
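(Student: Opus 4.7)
The plan is to split the proof into two independent parts: existence and uniqueness of a smooth solution, and then the lower bound. For existence and uniqueness I would use the semigroup framework on the closed manifold $F$. The operator $A := \Delta + f\,\id$, viewed as a densely defined operator on $L^2(F)$ with domain $H^2(F)$, is a bounded self-adjoint perturbation of the Laplace--Beltrami operator $\Delta$. Since $\Delta$ is self-adjoint with compact resolvent on the compact manifold $F$, so is $A$; moreover $A$ is bounded above by $\sup_F f$, hence generates an analytic semigroup $\{e^{tA}\}_{t\ge 0}$ on $L^2(F)$, and $u(t):=e^{tA}u_0$ is the unique mild solution of (\ref{E-heat-in}). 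For the smoothness claim I would run a standard parabolic bootstrap: analyticity of the semigroup yields $u(\cdot,t)\in C^\infty(F)$ for $t>0$, and differentiating the equation $\partial_t u=\Delta u+fu$ iteratively, combined with elliptic regularity for $\Delta$ and $f\in C^1(F)$, promotes this to joint smoothness in $(x,t)$ up to $t=0$ under $u_0\in C^2(F)$.

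For the lower bound I would apply the parabolic weak maximum principle after an exponential substitution designed to fix the sign of the reaction term. Set $v(x,t):=e^{-Kt}\bigl(u(x,t)-c\bigr)$ for a constant $K$ to be chosen; a direct computation gives
\begin{equation*}
 \partial_t v = \Delta v + (f-K)\,v + e^{-Kt}(f-K)\,c.
\end{equation*}
Picking $K\ge\sup_F f$ so that $f-K\le 0$ makes $v\equiv 0$ a supersolution of the equation for $v$ in the regime of interest. Assume by contradiction that the minimum $m(t):=\min_F v(\cdot,t)$ first dips below $0$ at some time $t_0>0$, attained at $x_0\in F$. At $(x_0,t_0)$ we have $\nabla v=0$, $\Delta v\ge 0$ and $\partial_t v\le 0$; plugging into the evolution equation yields an inequality whose sign contradicts the assumption $v(x_0,t_0)<0$. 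Undoing the substitution gives $u(\cdot,t)\ge c$ for all $t\ge 0$.

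The main obstacle is bookkeeping the sign of the inhomogeneity $fu$, which a priori has no preferred sign. The semigroup construction absorbs this cleanly because $f\,\id$ is simply a bounded perturbation of $\Delta$, so self-adjointness, compact resolvent, analyticity and the accompanying regularity theory carry over mechanically. For the maximum principle the same issue is neutralized by the exponential factor $e^{-Kt}$ above, which shifts the spectrum of the reaction term to the favorable side so that the contact-point argument closes. Everything else --- compactness and absence of boundary of $F$, and $C^1$-regularity of $f$ --- is exactly what standard parabolic theory on closed manifolds requires.
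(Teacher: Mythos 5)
Your existence-and-uniqueness part is essentially fine and is, in substance, more than the paper itself offers: the paper states this lemma with no proof beyond the phrase ``by the maximum principle, see \cite{ck1}'', so the semigroup construction (bounded self-adjoint perturbation of $\Delta$, analytic semigroup, parabolic regularity) is a legitimate way to fill that in. One overstatement: with $f$ only in $C^1(F)$ the regularity bootstrap stalls at the smoothness of the coefficient, so you obtain a classical $C^{2,1}$ solution but not $u(\cdot,t)\in C^\infty(F)$; this does not affect the lemma as stated.

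The lower-bound argument, however, has a genuine gap --- and the statement it is trying to prove is in fact false in the generality claimed. First, an algebra slip: with $v=e^{-Kt}(u-c)$ one gets $\partial_t v=\Delta v+(f-K)v+e^{-Kt}f\,c$, not $e^{-Kt}(f-K)c$. More importantly, at a first-contact point $(x_0,t_0)$ one has $v=0$, so the reaction term $(f-K)v$ vanishes there and the whole argument is decided by the sign of the inhomogeneity $e^{-Kt_0}f(x_0)\,c$, which the exponential substitution does nothing to control: it is negative whenever $f(x_0)\,c<0$, and then no contradiction arises. This is not a repairable technicality but a defect of the statement itself: take $F=S^1$, $f\equiv-1$, $u_0\equiv1$, $c=1$; the solution is $u(x,t)=e^{-t}$, so $u_0\ge c$ while $u(\cdot,t)<c$ for every $t>0$. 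The conclusion $u\ge c$ holds only under a sign condition such as $f\,c\ge0$ pointwise --- in particular for $c=0$ (preservation of non-negativity, which is what the contact-point or semigroup-positivity argument genuinely proves), or for $f\ge0$ and $c\ge0$, which is precisely the setting in which the paper invokes the lemma in Proposition~\ref{T-CH}(b). You should either restrict to $c=0$ (or assume $f\,c\ge 0$) or flag the hypothesis of the lemma as deficient. A minor further point: even with the correct signs, the first-contact argument only yields $\partial_t v=0$ rather than a strict contradiction, so one needs the standard perturbation $v+\eps(1+t)$, $\eps>0$, to close it.
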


Let $\lambda_0$ be the smallest eigenvalue (with a positive eigenfunction $e_0$)
of the Schr\"{o}dinger operator $\mathcal{H}=-\Delta -f\id$ on~$F$.
One may find the estimate $\lambda_0\ge-\max f$.

\begin{proposition}\label{T-CH}
Let $(F, g)$ be a closed Riemannian manifold, and $f\in C^\infty(F)$.

(a) If $u(x,t)$ is any positive solution of the linear PDE (\ref{E-heat-in})$_1$ on $F$
then the vector field  $H=-2\,\nabla(\log u)$ solves (\ref{E-Burgers-in}). Every solution of (\ref{E-Burgers-in}) comes by this way.

(b) Let $u(\cdot,t)\ (t\ge0)$ be a global solution of (\ref{E-heat-in}) on $F$ with $u_0>0$ and $f\ge0$.
Then $u(\cdot,t)>0$ for all $t\ge0$, and the solution vector-field $H=-2\,\nabla(\log u)$ of (\ref{E-Burgers-in}) approaches exponentially as $t\to\infty$ to a smooth vector-field $\bar H=-2\,\nabla(\log e_0)$ on $F$ -- a unique conservative solution of the~PDE
\begin{equation}\label{E-barv}
 \Div\bar H = g(\bar H,\bar H)/2 +2\,(f +\lambda_0).
\end{equation}
\end{proposition}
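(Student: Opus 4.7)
For part (a), the plan is the classical Cole--Hopf computation. Given $u>0$ satisfying (\ref{E-heat-in})$_1$, set $H=-2\,\nabla(\log u)$. Then $\dt H=-2\,\nabla(\dt u/u)=-2\,\nabla(\Delta u/u)-2\,\nabla f$, while a direct calculation gives the pointwise identity
\[
 \Div H-\tfrac12\,g(H,H)=-2\,\Delta u/u,
\]
using $\Div(\nabla\log u)=\Delta u/u-|\nabla u|^2/u^2$ and $g(\nabla\log u,\nabla\log u)=|\nabla u|^2/u^2$. Taking $\nabla$ of this identity and comparing yields (\ref{E-Burgers-in}). For the converse, since $H$ is potential, write $H=-2\,\nabla\phi$, put $u=e^\phi$, and reverse the computation: the resulting equation for $u$ is $\dt u=\Delta u+fu+\alpha(t)\,u$ with $\alpha(t)$ spatially constant (arising from the $x$-independent gauge in the potential), which is absorbed by multiplying $u$ by $\exp(-\!\int_0^t\!\alpha)$.

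For part (b), positivity of $u(\cdot,t)$ for all $t\ge0$ is immediate from Lemma~\ref{L-u-max} because $f\ge0$ and $u_0>0$. Then I would appeal to spectral theory on the closed manifold $(F,g)$: the Schr\"odinger operator $\mathcal{H}=-\Delta-f\,\id$ is self-adjoint with discrete spectrum $\lambda_0<\lambda_1\le\lambda_2\le\dots\to\infty$, the ground state $\lambda_0$ is simple, and $e_0>0$ (Perron--Frobenius / strong maximum principle for the semigroup $e^{-t\mathcal{H}}$). Expanding $u(\cdot,t)=\sum_k c_k e^{-\lambda_k t}e_k$ with $c_k=\langle u_0,e_k\rangle_{L^2}$, the coefficient $c_0=\int_F u_0\,e_0\,d\vol>0$ since both factors are positive. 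Thus $e^{\lambda_0 t}u(\cdot,t)\to c_0\,e_0$ in $L^2$ at the exponential rate $\lambda_1-\lambda_0$; applying parabolic smoothing to the residual $e^{\lambda_0 t}u-c_0 e_0$ (which satisfies the same linear equation) upgrades this to $C^\infty$-convergence. Since $H=-2\,\nabla\log u=-2\,\nabla\log(e^{\lambda_0 t}u)$ is insensitive to the $x$-independent factor $e^{\lambda_0 t}$, we conclude $H\to\bar H=-2\,\nabla\log e_0$ exponentially in every $C^k$.

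To verify (\ref{E-barv}) and uniqueness, I re-apply the Cole--Hopf identity to $v=e_0$: $\Div\bar H-\tfrac12 g(\bar H,\bar H)=-2\,\Delta e_0/e_0=2(f+\lambda_0)$, where the last equality uses the eigen-equation $-\Delta e_0=(f+\lambda_0)\,e_0$. Conversely, if $\bar H'=-2\,\nabla\log v$ with $v>0$ solves (\ref{E-barv}), the same identity forces $-\Delta v=(f+\lambda_0)v$, exhibiting $v$ as a positive $\lambda_0$-eigenfunction; simplicity of the principal eigenvalue gives $v=c\,e_0$ and hence $\bar H'=\bar H$. The main technical obstacle is promoting the $L^2$ exponential decay to $C^\infty$ convergence with retained exponential rate (needed so that gradients converge pointwise to yield $H\to\bar H$); this is standard via analytic semigroup estimates $\|(\mathcal{H}+C)^j e^{-t\mathcal{H}}\|\lesssim t^{-j}$ combined with elliptic regularity for $\mathcal{H}$, but the bookkeeping of constants on the restricted subspace orthogonal to $e_0$ must be done carefully.
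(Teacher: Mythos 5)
Your proof is correct and follows essentially the same route as the paper: the Cole--Hopf substitution $H=-2\,\nabla(\log u)$ (equivalently the potential $\psi=-2\log u$) reduces the forced Burgers equation to the linear equation $\dt u=\Delta u+f\,u$ for part (a), and the expansion in eigenfunctions of $\mathcal{H}=-\Delta-f\,\id$ with the simple positive ground state dominating gives part (b), including the same verification of (\ref{E-barv}) and the same uniqueness argument via simplicity of $\lambda_0$. The one step where you diverge (and arguably improve on the paper) is in showing the ground-state mode is present: you observe directly that $c_0=\int_F u_0\,e_0\,d\vol>0$ since both factors are positive, whereas the paper deduces positivity of the dominant eigenfunction from positivity of $u(\cdot,t)$ and then invokes Krasnoselskii's theorem to identify it with $e_0$.
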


\begin{proof}
(a) Notice that $\dt\circ\nabla=\nabla\circ\dt$. We rewrite (\ref{E-Burgers-in}) in a form of a conservation law,
\[
 \dt H = \nabla\big(\Div H -g(H,H)/2 -2\,f\big).
\]
This can be regarded as the compatibility condition for a function $\psi$ to exist, such that
\begin{equation}\label{E-HC2}
 \nabla\psi = H,\qquad \dt\psi = \Div H -g(H,H)/2 -2\,f.
\end{equation}
Substituting $H$ from (\ref{E-HC2})$_1$ into (\ref{E-HC2})$_2$,
and using the definition $\Delta=\Div\nabla$ for functions, we obtain the following PDE:
 $\dt\psi +g(\nabla\psi,\nabla\psi)/2=\Delta\psi -2\,f$.
Next we introduce $\psi=-2\,\log u\,$ so that
\[
 \dt\psi+\frac12\,g(\nabla\psi,\nabla\psi)-\Delta\psi +2\,f
 =-\frac{2}{u}\,\big(\dt u -\Delta u -f\,u\big).
\]

(b) Using Fourier method and Theorem~\ref{T-basis}
(Section~\ref{R-burgers-heat}), we represent a solution of (\ref{E-heat-in}) as series
\begin{equation}\label{E-series-u}
 u(\cdot,t)=\sum\nolimits_{j\,\ge\,j_0} c_j\,e^{-\lambda_j\,t} e_j,\qquad c_{j_0}\ne0
\end{equation}
 by eigenfunctions of $\mathcal{H}$.
The terms with $e^{-\lambda_{j_0} t}$ in (\ref{E-series-u}) dominate as $t\to\infty$,
and can be represented in one-term form as $\tilde c\,e^{-\lambda_{j_0} t}\,\tilde e$,
where $\tilde c\ne0$ and the eigenfunction $\tilde e$ (for $\lambda_{j_0}$) has unit $L_2$-norm.
By~the maximum principle (see Lemma~\ref{L-u-max}) we conclude that $u>0$ for all $t\ge0$.
Hence $\tilde e>0$.

The eigenspace, corresponding to $\lambda_{j_0}$, is one-dimensional, see \cite[Theorem~4.8]{lp94}.
Moreover, from \cite[Chapt. 2, Theorem 2.13]{k64}
we conclude that $j_0=0$,
hence $\lambda_{j_0}=\lambda_0$ and $\tilde e=e_0$.
Since the series (\ref{E-series-u}) converges absolutely and uniformly for all $t$, there exists
the limit vector field $\lim\limits_{t\to\infty} H(\cdot,t) =\bar H$,
\[
 \bar H(x) =-2\lim\limits_{t\to\infty} \frac{\nabla u(x,t)}{u(x,t)}
 =-2\sum\nolimits_{j\ge 0} c_j\,e^{-\lambda_j\,t}\nabla e_j(x)\Big/\sum\nolimits_{j\ge 0} c_j\,e^{-\lambda_j\,t} e_j(x)
 =-2\,\frac{\nabla e_0(x)}{e_0(x)},
 \]
see (\ref{diff}) in Section~\ref{R-burgers-heat}, and convergence to $\bar H(x)$ is exponential.
By the above we find
\[
 \Div\bar H = -2\,\Delta(\log e_0)
 =-2\,(\Delta e_0)/e_0 +2\,g(\nabla e_0, \nabla e_0)/g(e_0, e_0).
\]
Hence $\Div\bar H -g(\bar H,\bar H)/2=-2\,(\Delta e_0)/e_0= 2\,(f +\lambda_0)$,
that proves (\ref{E-barv}).

To prove uniqueness, assume the contrary, that (\ref{E-barv}) has another conservative solution
$\tilde H=\nabla(-2\log\tilde e_0)$, where $\tilde e_0$ is a positive function of unit $L_2$-norm.
Next, we calculate (\ref{E-barv}):
\begin{equation*}
 \Div\tilde H -\frac12\,g(\tilde H,\tilde H) -2\,(f(x) +\lambda_0) =-\frac2{\tilde e_0}\big[\Delta\tilde e_0 +(f+\lambda_0)\,\tilde e_0\big]
\end{equation*}
and get $\Delta\tilde e_0 +(f+\lambda_0)\,\tilde e_0=0$.
Since $\lambda_0$ is a simple eigenvalue of $\mathcal{H}$,
we have $\tilde e_0=e_0$ and $\tilde H=\bar H$.
\end{proof}

\subsection{Proofs}
\label{subsec:proofs}

\begin{proof} (of \textbf{Proposition~\ref{T-main-loc}})
 Let $g=g_0+h$, where $h=s\,\hat g_0$ and $s\in C^1(M)$.
 Notice that $\nabla^\calf\,\hat g_0=0$.
 By Lemma~\ref{L-dtRStrunc}, the linearization of (\ref{E-GF-Ricmix-mu}) at $g_0$
is the linear PDE on the leaves:
\begin{equation*}
 \dt s =n\,\Delta_\calf\,s-2\,g(\nabla s,\, H(g_0)) -2\,(\Sc_{\,\rm mix} -\|T\|^2-\Phi)_{g_0} s,
 -\|T\|^2-n\,\lambda_0)_{g_0}\,h.
\end{equation*}
and $s_{|\,t=0}$ is bounded. The result follows from the theory of linear parabolic PDEs.
\end{proof}

\begin{proof} (of \textbf{Proposition~\ref{T-mainA}})
By Proposition~\ref{T-main-loc}, there exists a local solution $g_t\ (0\le t<\eps)$ to (\ref{E-GF-Ricmix-mu}).

(i) By (\ref{E-nablaNNt2s})$_2$ of Lemma~\ref{L-btAt2}
with $s=-2(\Sc_{\,\rm mix} -\|T\|^2-\Phi)$,
and using (\ref{E-RicNs}), we obtain (\ref{Ec-ANtau1}) for $H$.

(ii) As in the proof of Proposition~\ref{T-CH} for $H=\nabla^\calf\psi$,
 we reduce (\ref{Ec-ANtau1}) to
\[
 \dt\psi+g(\nabla^\calf\psi, \nabla^\calf\psi)-n\,\Delta_\calf\psi =-n^2\,\beta_D.
\]
Then applying $\psi=-n\log u$ for a positive function $u$, we calculate
\[
 \dt\psi=-\frac{n}{u}\,\dt u,\quad
 \nabla^\calf\psi=-\frac{n}{u}\,\nabla^\calf u,\quad
 \Delta_\calf\psi=-\frac{n}{u}\,\Delta_\calf u +\frac{n}{u^2}\,g(\nabla^\calf u, \nabla^\calf u),
\]
and obtain
\[
 \dt\psi+g(\nabla^\calf\psi, \nabla^\calf\psi) -n\,\Delta_\calf\psi +n^2\,\beta_D
  =-\frac{n}{u}\,\big(\dt u -n\,\Delta_\calf u -n\,\beta_D\,u\big).
\]
Thus, the function $u>0$, introduced by $H=-n\,\nabla^\calf(\log u)$, obeys the Schr\"{o}dinger equation
\begin{equation}\label{Ec-dtrho1}
 \dt u = -n\,\mathcal{H}(u).
\end{equation}
Again, by the proof of Proposition~\ref{T-CH}(b), the global solution of (\ref{Ec-ANtau1}), $H=-n\,\nabla^\calf(\log u)$, approaches exponentially as $t\to\infty$ to
$\bar H=-n\,\nabla^\calf\log e_0$, where $e_0>0$ is
the (unique) eigenfunction corresponding to the smallest eigenvalue, $\lambda_0$,
of the Schr\"{o}dinger operator $\mathcal{H}$ on compact leaves.
\end{proof}

\begin{proof} (of \textbf{Theorem~\ref{T-main0}})
(i) By Proposition~\ref{T-mainA}(i), the Cauchy's problem for (\ref{Ec-dtrho1})
with $u(\cdot,0)=u_0$ admits a unique solution $u(x,t)\ (t\ge0)$ on any fiber.
By the maximum principle (see Lemma~\ref{L-u-max}) we conclude that $u>0$ for all $t\ge0$.
Hence there exists a unique global solution $g_t$ to~(\ref{E-GF-Ricmix-mu}).

(ii) Let  $\Phi=n\,\lambda_0$.
By Proposition~\ref{T-mainA}(ii), the limit vector field $\bar H$ depends only on $[g_0]$, and is the unique stationary solution of (\ref{Ec-ANtau1}):
\begin{equation}\label{E-barH}
 \Div^\calf\bar H=g(\bar H,\bar H)/n +n\,(\beta_D+\lambda_0).
\end{equation}
We have $\lambda_0\ge-\max\limits_{M}\beta_D$ and $\nabla^\calf\lambda_0=0$.
By Lemma~\ref{L-CC-riccati} and (\ref{E-barH}), we obtain
\[
 \lim\limits_{t\to\infty}(\Sc_{\,\rm mix}  -\|T\|^2) =\Div^\calf\bar H -g(\bar H, \bar H)/n -n\,\beta_D =n\,\lambda_0.
\]
Since the convergence $\Sc_{\,\rm mix}-\|T\|^2\to n\,\lambda_0$ as $t\to\infty$ has the exponential velocity $e^{(\lambda_0-\lambda_1)\,t}$,
by Proposition~\ref{P-converge}, a unique global solution $g_t$ of (\ref{E-GF-Ricmix-mu})
converge in $C^\infty$-topology as $t\to\infty$ to a  smooth Riemannian metric~$\bar g$.
By the above, $\|T\|_{t}\ge{c}^{-2}\|T\|_{0}$ for some $c>0$.
Thus, if $\tilde C={n^2}\,c^4$ then
\[
 \overline{\rm Sc}_{\,\rm mix}=\|T\|^2_{\bar g}+n\,\lambda_0
 \ge c^{-4}\|T\|_{0}^2 -n^2\max\limits_{F(x)}\beta_D>0.
\]
By $\dt T=0$, see (\ref{Es-S-b})$_2$,
we have $\bar T(q)\ne0$ for $q\in U_T$ (remark that $U_T$ is $t$-independent).
\end{proof}

\begin{proof} (of \textbf{Corollary~\ref{C-01}})
(a)~By the above, if $\nabla^\calf\beta_D=0$
then $e_0=const$ on the fibers and $\lambda_0=-\beta_D$, hence $\bar H=0$.
 (b)~In particular, $\lambda_0=0$ when $\beta_D=0$, hence $\bar b=0$.
 By (\ref{E-nablaT-1b}) and the conditions, we have $\nabla^\calf\|T\|^2_{\bar g}=0$.
 Since $\overline{\Sc}_{\,\rm mix}=\|T\|^2_{\bar g}$, the mixed scalar curvature of $\bar g$ is constant on the fibers.
 For $T=0$, $M$ splits along $D$ and $D_\calf$ (de-Rham decomposition).
\end{proof}

\begin{proof} (of \textbf{Theorem~\ref{T-twisted1}})
By Proposition~\ref{P-08b}, the flow (\ref{E-GF-Ricmix-mu}) preserves the twisted product structure.

(i) $\Rightarrow$ (ii), (iii):
(other implications can be shown similarly).
By~Proposition~\ref{T-mainA} with $\beta_D=0$, the mean curvature $H$
of the fibers $M_1\times\{y\}$ satisfies (\ref{E-multi-B}).
By~(\ref{Ec-dtrho1}) with $\beta_D=0$ and $H=-n\,\nabla^\calf(\log f)$, we obtain
 $\dt f = n\,\Delta_\calf f$, that is the heat equation for the function $f>0$ on the fibers.
\end{proof}

\begin{proof} (of \textbf{Corollary~\ref{T-main2}})
We apply Theorem~\ref{T-main2} for the fiber bundle $\pi: M_1\times M_2\to M_1$
with totally geodesic fibers $F_x=\{x\}\times M_2$
and the potential function $\psi_0=-\log f$ (at $t=0$).
As in the proof of Theorem~\ref{T-twisted1}
(see also Section~\ref{sec:burgers}), we reduce (\ref{E-multi-B}) for $H$ to the heat equation
for $f$ along the fibers, and conclude that $\bar H=0$ for the limit metric $\bar g$.
Since the canonical foliation $M_1\times\{y\}$ is $\bar g$-umbilical, by the above we have $\bar b=0$ (i.e., $M_1\times\{y\}$ is $\bar g$-totally geodesic).
Thus, $M_1\times M_2$ is the metric product with respect to $\bar g=\bar f^2 g_1\times g_2$ (de-Rham decomposition).
\end{proof}

\subsection{Applications to surfaces}
\label{sec:surfrev}

The results of the section can be easily generalized for hypersurfaces of revolution.

\begin{example}\rm
Let $\pi: M^2\to B$ be a fiber bundle of a two-dimensional torus $(M^2, g_0)$
with Gaussian curvature $K$, and the fibers are closed geodesics.
Let the geodesic curvature $k$ of orthogonal
(to fibers) curves obeys $k=N(\psi_0)$ for a smooth function $\psi_0$ on $M^2$.
By Theorem~\ref{T-main0}, the equation (\ref{E-GF-KM2}) admits a unique solution $g_t\ (t\ge0)$ converging
as $t\to\infty$ to a flat metric, and the fibers of $\pi$ compose a rational linear foliation.
\end{example}

Metric on a surface of revolution is a special class of warped products
(see Definition~\ref{D-twisted}).
The~equation (\ref{E-GF-KM2}) on a surface of revolution
provides fruitful geometrical interpretation
of the classical relation between Burgers and heat equations.
We are looking for a one-parameter family of surfaces of revolution,
which are foliated by profile curves, and the induced metric $g_t$ obeys~(\ref{E-GF-KM2}).
The profile of $M^2_0$ parameterized as in Example~\ref{Ex-surfrev}(b)
is $XZ$-plane curve $\gamma_0=[\rho(\cdot,0), 0, h(\cdot,0)]$ (the fiber),
and $\theta$-curves are circles in $\RR^3$.
Let $x$ be the natural parameter of $\gamma_t=r(\cdot,t)$, i.e.,
\begin{equation}\label{E-natural}
 (\rho_{,x})^2+(h_{,x})^2=1.
\end{equation}
Thus $N=r_{,x}$ is the unit normal to $\theta$-curves on $M_t^2$.
 The geodesic curvature, $k$, of $\theta$-curves obeys Burgers equation, while
 the  radius $\rho$ of $\theta$-curves (as Euclidean circles) satisfies the heat equation,
 see Example~\ref{Ex-surfrev}(b); both functions are related by the Cole-Hopf transformation
 $k = -(\log\rho)_{,x}$.

\noindent
 It is known that the gaussian curvature is $K=-\rho_{,xx}/\rho$, and one may assume $\rho>0$ for $t=0$.
Notice that (\ref{En1-riccati}), $k_{,x}=k^2+K$ for $t$, is satisfied.
  The induced metric on $M_t$ has the rotational symmetric form
 $g_t =dx^2+\rho^2\,d\theta^2$.
The equation PDE for metrics reads as
 $\dt g =-2\,K\,\hat g\ \Longrightarrow\ \dt \rho = -K\,\rho$.
Thus, (\ref{E-GF-KM2}) yields the Burgers equation (\ref{E-kNN}) for $k$
and the heat equation for $\rho$,
\begin{equation}\label{E-lambdaphi}
 k_{,t} = k_{,xx}-(k^2)_{,x},\qquad
 \rho_{,t} = \rho_{,xx}.
\end{equation}
Differentiating (\ref{E-lambdaphi})$_2$ by $x$, we find $(\rho_{,x})_{,t} = (\rho_{,x})_{,xx}$.
Since $|\rho_{,x}|\le1$ for $t=0$, by the maximum principle, $|\rho_{,x}|\le1$ holds for $t\ge0$.
When such a solution $\rho\ (t\ge0)$ is known, we find $h$ from (\ref{E-natural}) as
 $h = \int \sqrt{1-(\rho_{,x})^2}\,dx$.
For example, suppose that the boundary conditions are
 $\rho(0,t)=\rho_0,\
 \rho(l,t)=\rho_1,\
 h(0,t)=z_0\ (t\ge0)$,
where $\rho_1>\rho_0>0$. By the theory of heat equation,
the solution $\rho$ approaches as $t\to\infty$ to a linear function
$\bar\rho=x\rho_0+(l-x)\rho_1>0$.
Also, $h$ approaches as $t\to\infty$ to a linear function $\bar h=x z_0+(l-x)z_1$,
where $z_1$ may be calculated from the equality $(\rho_1-\rho_0)^2+(z_1-z_0)^2=l^2$.
The curves $\gamma$ are isometric each to other for all $t$ (with the same arc-length parameter $x$).
The limit curve $\lim\limits_{t\to\infty}\gamma_t=\bar\gamma=[\bar\rho, \bar h]$
is a line segment of length $l$. Thus, $M_t$ approach as $t\to\infty$ to the
flat surface of revolution $\bar M$ -- the patch of a cone generated by $\bar\gamma$.

\section{Appendix: Parabolic PDEs on closed Riemannian manifolds}
\label{R-burgers-heat}

Let $(F^p, g)$ be a $C^\infty$-smooth closed (i.e., compact without a boundary) Riemannian manifold.
If $\mathcal{H}$ is a bounded linear operator acting from a Banach space $E_1$ to a Banach space $E_2$,
we shall write $\mathcal{H}:\;E_1\rightarrow E_2$.
The \textit{resolvent set} of $\mathcal{H}:\;E\rightarrow E$, is defined by
 $\rho(\mathcal{H})=\{\lambda\in\CC:\ \mathcal{H}-\lambda\,\id\mbox{ is invertible and } (\mathcal{H}-\lambda\id)^{-1}\ \mbox{is bounded}\}$.
The \textit{resolvent} of $\mathcal{H}$ is the operator
$R_\lambda(\mathcal{H})=(\mathcal{H}-\lambda\id)^{-1}$ for $\lambda\in\rho(\mathcal{H})$,
and the \textit{spectrum} of $\mathcal{H}$ is the set $\sigma(\mathcal{H}):=\CC\setminus\rho(\mathcal{H})$,
see \cite[Chapt. VII, Sect. 9]{ds}.
Let $H^l(F)$ be the Hilbert space of differentiable by Sobolev real functions on a manifold $F$,
of order $l$; with the inner product $(\cdot,\cdot)_{l}$ and the norm $\|\cdot\|_l$.
In particular, $H^0(F)=L_2(F)$ with the inner product $(\cdot,\cdot)_{0}$ and the norm $\|\cdot\|_0$.
We shall denote $\|\cdot\|_{c^k}$ the norm in $C^k(F)$ ($\|\cdot\|_c$ when $k=0$).
Consider the following operator acting in the Hilbert space $L_2(F)$:
\begin{equation}\label{dfH}
 \mathcal{H}(u)=-\Delta u -f(x)\,u,
\end{equation}
defined on the domain $\mathcal{D}=H^2(F)$.
The operator $\mathcal{H}$ is self-adjoint, bounded from below
(but it is unbounded). Its resolvent is compact,
i.e., for some $\lambda\in\rho(\mathcal{H})$ the operator $R_\lambda(\mathcal{H})$
maps any bounded in $L_2(F)$ set onto a set, whose closure is compact in $L_2(F)$.

\begin{proposition}[\textbf{Elliptic regularity}, see \cite{bes}]\label{prellreg}
If the operator $\mathcal{H}$ is defined by (\ref{dfH}) and
$\gamma\notin\sigma(\mathcal{H})$, then for any nonnegative integer $k$ we have
 $(\mathcal{H}-\gamma\id)^{-1}:\; H^k(F)\rightarrow H^{k+2}(F)$.
\end{proposition}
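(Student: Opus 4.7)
\textbf{Proof proposal for Proposition~\ref{prellreg}.}
The plan is to reduce the statement to standard interior elliptic regularity on a closed manifold and then bootstrap on $k$. First I would observe that the operator $\mathcal{H}-\gamma\id=-\Delta-(f+\gamma)\id$ is a linear second-order differential operator on $F$ whose principal symbol is that of $-\Delta$, hence uniformly elliptic, with smooth coefficients (recall that $f\in C^\infty(F)$ by assumption in the applications of interest). Since $\gamma\notin\sigma(\mathcal{H})$, the map $\mathcal{H}-\gamma\id:H^2(F)\to L_2(F)$ is a bounded bijection, and by the open mapping theorem its inverse is a bounded operator $L_2(F)\to H^2(F)$, settling the base case $k=0$.

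For the inductive step I would proceed by a standard bootstrap. Assume the claim holds for some $k\ge 0$, fix $v\in H^{k+1}(F)\subset H^k(F)$ and set $u=(\mathcal{H}-\gamma\id)^{-1}v$, so that $u\in H^{k+2}(F)$ by induction. I want to upgrade this to $u\in H^{k+3}(F)$. Because $F$ is compact without boundary, it suffices to prove the statement locally using a finite cover by coordinate charts subordinate to a partition of unity. In any such chart, $\mathcal{H}-\gamma\id$ becomes a uniformly elliptic operator of second order with smooth coefficients acting on scalar functions in an open subset of $\RR^p$, and the classical interior $H^s$-regularity theorem (see, e.g., standard references on linear elliptic equations) gives the estimate
\begin{equation*}
\|\varphi u\|_{k+3}\le C\bigl(\|\varphi v\|_{k+1}+\|u\|_{k+2}\bigr)
\end{equation*}
for every cut-off $\varphi$ supported in the chart. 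Patching the chart estimates together via the partition of unity and summing yields $u\in H^{k+3}(F)$ with the corresponding global bound
\begin{equation*}
\|u\|_{k+3}\le C'\bigl(\|v\|_{k+1}+\|u\|_{k+2}\bigr)\le C''\|v\|_{k+1},
\end{equation*}
where the last inequality uses the $k$-th case of the induction for the estimate of $\|u\|_{k+2}$ in terms of $\|v\|_k\le\|v\|_{k+1}$. This gives boundedness of $(\mathcal{H}-\gamma\id)^{-1}:H^{k+1}(F)\to H^{k+3}(F)$, completing the induction.

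The main obstacle, such as there is one, is keeping the bookkeeping clean when transferring the Euclidean interior estimates to the manifold setting: one must verify that the cut-off commutator terms $[\mathcal{H},\varphi]u$ are controlled in $H^{k+1}$ by $\|u\|_{k+2}$ (a first-order operator applied to $u$), which uses smoothness of $f$ and of the metric so that all coefficients of $\mathcal{H}$ in local coordinates lie in $C^\infty$. Once this is in place, no further analytic input is needed beyond the textbook interior elliptic regularity; spectral information on $\mathcal{H}$ enters only through $\gamma\notin\sigma(\mathcal{H})$, which guarantees invertibility and thereby allows us to bound $\|u\|_{k+2}$ by $\|v\|_k$ in the inductive hypothesis.
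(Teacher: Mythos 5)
Your argument is correct: the base case via the open mapping theorem plus the bootstrap through local interior elliptic estimates, partition of unity, and control of the first-order commutators $[\mathcal{H},\varphi]u$ is the standard proof of this statement. The paper itself offers no proof at all --- it states the proposition as a known fact with a citation to \cite{bes} --- so your write-up simply supplies the textbook argument that the cited reference relies on, and it does so without any gaps.
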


\begin{proposition}[\textbf{Sobolev embedding Theorem}, see \cite{bes}]\label{prembed}
If a nonnegative $k\in\ZZ$ and $l\in\NN$ are such that $2\,l>p+2\,k$,
then $H^l(F)$ is continuously embedded into $C^k(F)$.
\end{proposition}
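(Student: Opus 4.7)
My plan is to reduce the statement to the classical Sobolev embedding theorem on Euclidean space $\mathbb{R}^p$, and then prove the Euclidean version via Fourier analysis. Since $F$ is closed, it admits a finite atlas $\{(U_\alpha,\varphi_\alpha)\}_{\alpha=1}^N$ with a subordinate smooth partition of unity $\{\chi_\alpha\}$ such that each $\varphi_\alpha(\mathrm{supp}\,\chi_\alpha)$ is compactly contained in an open set of $\mathbb{R}^p$. For $u\in H^l(F)$, I would write $u=\sum_\alpha\chi_\alpha u$ and push each piece down via $\varphi_\alpha$ to obtain $v_\alpha:=((\chi_\alpha u)\circ\varphi_\alpha^{-1})\in H^l(\mathbb{R}^p)$ (extended by zero). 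A standard check using the chain rule and the smoothness/compactness of the atlas shows $\|v_\alpha\|_{H^l(\mathbb{R}^p)}\le C\|u\|_{H^l(F)}$ with $C$ depending only on the atlas and partition of unity; the $C^k(F)$ norm is likewise controlled by $\sum_\alpha\|v_\alpha\|_{C^k(\mathbb{R}^p)}$ up to a constant.

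The Euclidean step is then the heart of the matter. Using the Fourier characterization $\|v\|_{H^l(\mathbb{R}^p)}^2=\int(1+|\xi|^2)^l|\hat v(\xi)|^2\,d\xi$, for any multi-index $\alpha$ with $|\alpha|\le k$ I would write formally
\begin{equation*}
\partial^\alpha v(x)=(2\pi)^{-p/2}\int_{\mathbb{R}^p} e^{ix\cdot\xi}(i\xi)^\alpha\hat v(\xi)\,d\xi.
\end{equation*}
By Cauchy--Schwarz,
\begin{equation*}
|\partial^\alpha v(x)|\le C\Bigl(\int(1+|\xi|^2)^{|\alpha|-l}\,d\xi\Bigr)^{1/2}\|v\|_{H^l(\mathbb{R}^p)},
\end{equation*}
and the $\xi$-integral converges precisely when $2(l-|\alpha|)>p$. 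The hypothesis $2l>p+2k$ ensures this for all $|\alpha|\le k$. Continuity of $\partial^\alpha v$ then follows from Riemann--Lebesgue since $(i\xi)^\alpha\hat v\in L^1(\mathbb{R}^p)$ under the same integrability condition. Summing over the partition gives $\|u\|_{C^k(F)}\le C'\|u\|_{H^l(F)}$.

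The main obstacle I expect is not the Fourier-analytic core, which is routine, but the bookkeeping required to make the intrinsic definition of $H^l(F)$ (e.g.\ via iterated covariant derivatives or powers of the Laplacian on a closed Riemannian manifold) agree, up to equivalent norms, with the chart-based definition used above. This equivalence relies on the compactness of $F$ and on commuting derivatives with transition maps; once it is established, the reduction to $\mathbb{R}^p$ is uniform across charts. A secondary subtlety is the borderline case $2l=p+2k$, which is excluded by the strict inequality in the hypothesis and so need not be addressed.
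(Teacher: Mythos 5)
The paper does not prove this proposition at all: it is stated as a known classical fact with a pointer to \cite{bes}, so there is no internal argument to compare against. Your proposal supplies the standard self-contained proof, and it is essentially correct. The reduction to $\RR^p$ by a finite atlas and partition of unity, followed by the Fourier-side estimate
\[
|\partial^\alpha v(x)|\le C\Bigl(\int_{\RR^p}(1+|\xi|^2)^{|\alpha|-l}\,d\xi\Bigr)^{1/2}\|v\|_{H^l(\RR^p)},
\]
with convergence of the $\xi$-integral exactly when $2(l-|\alpha|)>p$ (guaranteed for all $|\alpha|\le k$ by $2l>p+2k$), is the textbook route. Two small points you should make explicit if you write this up in full: first, the Fourier inversion formula for $\partial^\alpha v$ should be justified by first working with $v\in C_c^\infty(\RR^p)$ and then passing to general $v\in H^l(\RR^p)$ by density, using the uniform bound to see that the limit lies in $C^k$; second, as you already note, one must verify that the intrinsic Sobolev norm on the closed manifold $(F,g)$ (via covariant derivatives or powers of $\Delta$) is equivalent to the chart-based norm --- this uses compactness of $F$ and is where the constant's dependence on the atlas enters. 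Neither point is a gap in substance, only in bookkeeping. Your approach buys a self-contained elementary proof where the paper simply defers to the literature.
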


\begin{proposition}\label{L-spectrL}
The spectrum $\sigma(\mathcal{H})$ consists of an infinite sequence of isolated real eigenvalues
$\lambda_0\le\lambda_1\le\lambda_2\le\dots\lambda_n\le\dots$
(counting their multiplicities), $\lambda_n\rightarrow\infty$ as
$n\rightarrow\infty$. If we fix the orthonormal basis $\{e_n\}$ in $L_2(F)$
of the corresponding eigenfunctions
(i.e., $\mathcal{H}e_n=\lambda_ne_n$, $\|e_n\|_{0}=1$),
then any function $u\in L_2(F)$ is expanded into the series
(converging to $u$ in the $L_2(F)$-norm)
\begin{equation}\label{expan}
 u({x})=\sum\nolimits_{n=0}^\infty c_n\,e_n({x}),
 \qquad
 c_n = (u, e_n)_{0} =\int_{F} u(x)\,e_n(x)\,dx.
\end{equation}
\end{proposition}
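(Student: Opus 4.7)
The plan is to reduce the spectral analysis of the unbounded self-adjoint operator $\mathcal{H}$ to the Hilbert--Schmidt spectral theorem for a compact self-adjoint operator, via the resolvent. Since $\mathcal{H}$ is self-adjoint and bounded from below (both properties are recorded just before the proposition), I would first choose a real $\gamma$ strictly below $\inf\sigma(\mathcal{H})$. Then $\gamma\in\rho(\mathcal{H})$, so the resolvent $R_\gamma(\mathcal{H})=(\mathcal{H}-\gamma\,\id)^{-1}$ is a bounded operator on $L_2(F)$; it is self-adjoint because $\mathcal{H}$ is self-adjoint and $\gamma$ is real, and it is compact by hypothesis.

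Next I would invoke the classical spectral theorem for compact self-adjoint operators on the separable Hilbert space $L_2(F)$. This produces a sequence of real eigenvalues $\mu_n\to 0$ of $R_\gamma(\mathcal{H})$, each of finite multiplicity, together with an orthonormal basis $\{e_n\}$ of $L_2(F)$ made of corresponding eigenfunctions. Because $R_\gamma(\mathcal{H})$ is injective by definition of a resolvent, $0$ does not appear among the $\mu_n$, so the formula $\lambda_n=\gamma+1/\mu_n$ defines real numbers with $|\lambda_n|\to\infty$; the bound $\lambda_n>\gamma$ (from the choice of $\gamma$ below $\sigma(\mathcal{H})$) forces $\lambda_n\to +\infty$, which allows the nondecreasing ordering $\lambda_0\le\lambda_1\le\dots$ The equivalence $R_\gamma(\mathcal{H})e_n=\mu_n e_n \Longleftrightarrow \mathcal{H}e_n=\lambda_n e_n$ (together with $e_n\in\mathcal{D}$, which follows from $e_n$ lying in the range of $R_\gamma(\mathcal{H})$) transfers the eigenpicture to $\mathcal{H}$ and identifies the finite-dimensional eigenspaces of the two operators.

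To see that no other points lie in $\sigma(\mathcal{H})$, I would take $\lambda\ne\lambda_n$ for all $n$; then $\mu:=(\lambda-\gamma)^{-1}$ is nonzero and distinct from every $\mu_n$, so $\mu\in\rho(R_\gamma(\mathcal{H}))$. The elementary identity
\[
 \mathcal{H}-\lambda\,\id \;=\; -(\lambda-\gamma)\bigl(R_\gamma(\mathcal{H})-\mu\,\id\bigr)(\mathcal{H}-\gamma\,\id)
\]
on $\mathcal{D}$ then yields a bounded inverse $(\mathcal{H}-\lambda\,\id)^{-1}=-\mu\,R_\gamma(\mathcal{H})\bigl(R_\gamma(\mathcal{H})-\mu\,\id\bigr)^{-1}$, so $\lambda\in\rho(\mathcal{H})$. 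Hence $\sigma(\mathcal{H})=\{\lambda_n\}$ consists entirely of isolated real eigenvalues of finite multiplicity, and the expansion (\ref{expan}) is then simply the Fourier development of $u\in L_2(F)$ with respect to the orthonormal basis $\{e_n\}$, the coefficients being $c_n=(u,e_n)_0$.

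The main technical point to secure is the compactness of $R_\gamma(\mathcal{H})$ on $L_2(F)$. Although the paper asserts it, I would justify it by combining elliptic regularity (Proposition~\ref{prellreg}), which gives $R_\gamma(\mathcal{H}):L_2(F)\to H^2(F)$ as a bounded map, with the Rellich--Kondrachov compact embedding $H^2(F)\hookrightarrow L_2(F)$ available because $F$ is closed. Once that compactness is granted, the remainder of the argument is a formal consequence of the spectral theorem and the resolvent identity.
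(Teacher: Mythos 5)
Your argument is correct and follows essentially the same route as the paper: compactness of the resolvent via elliptic regularity (Proposition~\ref{prellreg}) together with the compact embedding $H^2(F)\hookrightarrow L_2(F)$, followed by the spectral theorem to obtain the discrete spectrum and the orthonormal eigenbasis. You merely spell out in more detail the transfer of the eigenvalue picture from $R_\gamma(\mathcal{H})$ to $\mathcal{H}$ and the verification that $\sigma(\mathcal{H})=\{\lambda_n\}$, steps the paper leaves implicit in its citation of the spectral expansion theorem.
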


The claim of Proposition~\ref{L-spectrL} follows from the following facts. Since by Proposition~\ref{prellreg},
we have $(\mathcal{H}-\gamma\id)^{-1}:\; L_2(F)\rightarrow H^2(F)$ for $\gamma\notin\sigma(\mathcal{H})$,
and the embedding of $H^2(F)$ into $L_2(F)$ is continuous and compact, see \cite{bes},
then the operator $(\mathcal{H}-\gamma\id)^{-1}:\,L_2(F)\rightarrow L_2(F)$ is compact.
This means that the spectrum $\sigma(\mathcal{H})$ of the operator $\mathcal{H}$ is discrete, hence by the spectral expansion theorem for self-adjoint operators, the functions $\{e_n\}_{n\ge0}$ form an orthonormal basis in $L_2(F)$, see \cite[Part I, Chapt VII, Sect. 4, and Part II, Chapt. XII, Sect.~2]{ds}.

\begin{example}\label{Ex-S1}\rm
The~Cauchy's problem (\ref{E-heat}) with $u(0,\cdot)=u_0\in H^2(F)$
has a unique solution in the class of functions  $C([0,\infty),\,H^2(F))\cap C^1((0,\infty],\,L^2(F))$.
The solution has the property $u(\cdot,t)\in C^\infty(F)$ for $t>0$.
Moreover,
 $\lim\limits_{t\to\infty}u(\cdot,t)=\bar u_0=\frac1{(2\pi)^p}\int_{F} u_0(x)\,dx$
 and
 $\|u_t-\bar u_0\|\le e^{-t}\|u_0-\bar u_0\|$ for $t>0$.
The \textit{eigenvalue problem} $-\Delta u=\lambda\,u$ on $(F,g)$ has solution with a sequence of eigenvalues
with repetition (each one as many times as the dimension of its finite dimensional eigenspace) $0=\lambda_0<\lambda_1\le\lambda_2\le\cdots\uparrow\infty$.
Let $\phi_j$ be an eigenfunction with eigenvalue $\lambda_j$, satisfying $\int_{F}\phi_i(x)\phi_j(x)\,d x_g=\delta_{ij}$.
For $\lambda_0$, the eigenfunction is the constant $\phi_0=\vol(F,g)^{-1/2}$.
\end{example}

Our goal is to formulate conditions under which this series converges uniformly
to $u$ and it is possible to differentiate it. For this we need
estimates for the eigenvalues and the eigenfunctions of~$\mathcal{H}$.
Denote the distribution function of eigenvalues of $\mathcal{H}$ by
$\mathcal{N}(\lambda)=\#\{\lambda_n: \lambda_n\le\lambda\}$.

H\"{o}rmander \cite{hor} obtained an asymptotic formula for the kernel $e(x,y,\lambda)$
of the spectral projection $E(\lambda)$ (see \cite[Part II, Chapt. XII]{ds}),
which for compact $F$ has the form
 $e(x,y,\lambda)=\sum\nolimits_{\lambda_n\le\lambda}e_n(x)e_n(y)$.
In our case, this formula is represented by
$e(x,x,\lambda)=\alpha(x)\lambda^{\frac{p}{2}}(1+o(1))$ for $\lambda\rightarrow\infty$ uniformly w.\,r.\,t. $x\in F$,
where the function $\alpha(x)$ belongs to $C^\infty(F)$ and depends only on $(F,g)$.
Integrating the formula for $e(x,x,\lambda)$ over $F$, we obtain the formula of Weyl asymptotics
\begin{equation}\label{weyl}
 \mathcal{N}(\lambda)=\theta\lambda^{\frac{p}{2}}(1+o(1))\quad \rm{as}\quad
\lambda\rightarrow\infty,
\end{equation}
where the constant $\theta>0$ depends only on $(F,g)$.

\begin{lemma}\label{lesteigf}
There exists $\delta>0$ and $\gamma_0\in\RR$
such that for any $n\in\NN\cup\{0\}$ we have $e_n\in C(F)$ and
\begin{equation}\label{esteigf}
\|e_n\|_c\le \delta(\lambda_n+\gamma_0)^{[p/4]+1}.
\end{equation}
\end{lemma}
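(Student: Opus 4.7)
The plan is a standard elliptic bootstrap: combine Proposition~\ref{prellreg} (elliptic regularity) with Proposition~\ref{prembed} (Sobolev embedding), using that $e_n$ is an eigenfunction of unit $L_2$-norm. The idea is that each application of $(\mathcal{H}-\gamma\id)^{-1}$ gains two derivatives, and after enough iterations the Sobolev scale surpasses the threshold needed for continuous embedding into $C(F)$.

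First, I would choose $\gamma_0\in\RR$ so large that $-\gamma_0<\lambda_0$; then $-\gamma_0\in\rho(\mathcal{H})$, the numbers $\lambda_n+\gamma_0$ are strictly positive for every $n$, and the operator $\mathcal{A}:=\mathcal{H}+\gamma_0\,\id$ is invertible. By Proposition~\ref{prellreg}, $\mathcal{A}^{-1}:H^{2j}(F)\to H^{2j+2}(F)$ is bounded for every $j\ge 0$, hence the closed graph (or open mapping) argument yields constants $C_j>0$ with
\[
 \|u\|_{2j+2}\le C_j\,\|\mathcal{A} u\|_{2j}\qquad(u\in H^{2j+2}(F)).
\]
Here one uses that $f\in C^\infty(F)$, so that the coefficients of $\mathcal{A}$ are smooth enough to iterate the estimate at every order.

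Second, the eigenvalue relation $\mathcal{A}e_n=(\lambda_n+\gamma_0)\,e_n$ lets us iterate: applying the displayed estimate $k$ times gives
\[
 \|e_n\|_{2k}\le\Big(\prod_{j=0}^{k-1}C_j\Big)\|\mathcal{A}^k e_n\|_0
 =\Big(\prod_{j=0}^{k-1}C_j\Big)(\lambda_n+\gamma_0)^k\|e_n\|_0=\tilde C_k\,(\lambda_n+\gamma_0)^k,
\]
since $\|e_n\|_0=1$. Now set $k=[p/4]+1$, so that $2\cdot(2k)>p+2\cdot 0$. By Proposition~\ref{prembed} with $l=2k$, the embedding $H^{2k}(F)\hookrightarrow C(F)$ is continuous, and there is $\kappa>0$ with $\|u\|_c\le\kappa\,\|u\|_{2k}$. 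Combining,
\[
 \|e_n\|_c\le\kappa\,\tilde C_k\,(\lambda_n+\gamma_0)^{[p/4]+1},
\]
which is (\ref{esteigf}) with $\delta=\kappa\,\tilde C_k$. In particular, $e_n\in C(F)$.

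The only real obstacle is the justification of the iterated elliptic estimate with explicit dependence on $\lambda_n$: one must be careful to apply Proposition~\ref{prellreg} to the single operator $\mathcal{A}$ rather than to $\mathcal{H}-\lambda_n\id$ (whose norm would blow up with $n$), and to use $\mathcal{A}^k e_n=(\lambda_n+\gamma_0)^k e_n$ to collect the $n$-dependence into a clean polynomial factor. Everything else is routine functional analysis on closed manifolds.
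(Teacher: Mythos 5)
Your argument is correct and follows essentially the same route as the paper's own proof: shift by $\gamma$ to make $\mathcal{H}+\gamma\,\id$ invertible, iterate elliptic regularity via $e_n=(\lambda_n+\gamma)^l(\mathcal{H}+\gamma\id)^{-l}e_n$ to get $\|e_n\|_{2l}\le\tilde\delta(\lambda_n+\gamma)^l$, and then apply the Sobolev embedding $H^{2l}(F)\hookrightarrow C(F)$ for $4l>p$ with $l=[p/4]+1$. Your step-by-step formulation of the iterated estimate with constants $C_j$ is just a more explicit rendering of the paper's one-line use of the boundedness of $(\mathcal{H}+\gamma\id)^{-l}:L_2(F)\to H^{2l}(F)$.
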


\begin{proof}
If we take $\gamma>-\lambda_0$, then the operator $\mathcal{H}+\gamma\id$ is
invertible in $L_2(F)$ and its inverse $(\mathcal{H}+\gamma\id)^{-1}$ is bounded in $L_2(F)$.
By Proposition~\ref{prellreg},
 $(\mathcal{H}+\gamma\id)^{-1}:\;H^k(F)\rightarrow H^{k+2}(F)$ holds for $k=0,1,2,\dots$
 Then for any $l\in\NN$ we have
\begin{equation}\label{powact}
(\mathcal{H}+\gamma\id)^{-l}:\;L_2(F)\rightarrow H^{2l}(F).
\end{equation}
As is easy to check, for any nonnegative integer $n$ we have
$e_n=(\lambda_n+\gamma)^l(\mathcal{H}+\gamma\id)^{-l}e_n$. In view of (\ref{powact}),
 $e_n\in H^{2l}(F)$ holds, and we have
 $\|e_n\|_{2l}\le\tilde\delta(\lambda_n+\gamma)^l$ for some $\tilde\delta>0$ and $n=0,1,2,\dots$

 On the other hand, by Proposition~\ref{prembed} with $k=0$, for $4\,l>p$ the space
$H^{2l}(F)$
is continuously embedded into $C(F)$. Hence $e_n\in C(F)$, and we have
 $\|e_n\|_c\le\bar\delta\,\|e_n\|_{2l}$ for some $\bar\delta>0$ and $n=0,1,2,\dots$
 The above estimates
imply the desired inequality (\ref{esteigf}) with $\delta=\tilde\delta\,\bar\delta$.
\end{proof}

\begin{theorem}\label{T-basis}
Let $(F, g)$ be a closed Riemannian manifold, and $f\in C^\infty(F)$. Then
for the operator $\mathcal{H}=-\Delta u -f(x)\,\id$, see (\ref{dfH}),
any eigenfunction $e_n$ belongs to class $C^\infty(F)$, and

(i) the expansion (\ref{expan}) converges to $u$ absolutely and uniformly on $F$;

(ii) for any multi-index $\alpha$ with $|\alpha|\ge 1$ we have
\begin{equation}\label{diff}
 D^\alpha u=\sum\nolimits_{n=0}^\infty(u, e_n)_{0}D^\alpha e_n,
\end{equation}

and this series converges to $D^\alpha u$ absolutely and uniformly on $F$.
\end{theorem}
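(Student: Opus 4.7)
The plan is to proceed in three steps: establish $e_n\in C^\infty(F)$, prove absolute and uniform convergence of (\ref{expan}) for smooth $u$, and justify termwise differentiation. Throughout I will tacitly assume $u\in C^\infty(F)$, which is the regularity in which part~(ii) is meaningful.

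For the smoothness of the eigenfunctions I would bootstrap Proposition~\ref{prellreg}. Fix $\gamma_0$ as in Lemma~\ref{lesteigf}, so $\mathcal{H}+\gamma_0\id$ is invertible on $L_2(F)$. The identity $e_n=(\lambda_n+\gamma_0)(\mathcal{H}+\gamma_0\id)^{-1}e_n$, iterated $l$ times, produces $e_n\in H^{2l}(F)$ together with the quantitative estimate $\|e_n\|_{2l}\le\tilde\delta_l(\lambda_n+\gamma_0)^l$. Proposition~\ref{prembed} then places $e_n\in C^k(F)$ for every $k$, hence $e_n\in C^\infty(F)$.

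For part~(i), the Fourier coefficients $c_n=(u,e_n)_0$ decay rapidly: self-adjointness of $\mathcal{H}$ gives $(\lambda_n+\gamma_0)^l c_n=((\mathcal{H}+\gamma_0\id)^l u,e_n)_0$, so
\[
 \sum\nolimits_n(\lambda_n+\gamma_0)^{2l}|c_n|^2=\|(\mathcal{H}+\gamma_0\id)^l u\|_0^2<\infty
\]
for every $l\in\NN$. Combining this with the pointwise bound (\ref{esteigf}) and Cauchy--Schwarz,
\[
 \sum\nolimits_n|c_n|\,\|e_n\|_c\le\delta\Bigl(\sum\nolimits_n(\lambda_n+\gamma_0)^{2l}|c_n|^2\Bigr)^{1/2}\Bigl(\sum\nolimits_n(\lambda_n+\gamma_0)^{-2l+2([p/4]+1)}\Bigr)^{1/2}.
\]
The Weyl asymptotic (\ref{weyl}) gives $\lambda_n\sim(n/\theta)^{2/p}$, so the second factor is finite as soon as $2l-2([p/4]+1)>p/2$, which is arranged by taking $l$ sufficiently large. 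The Weierstrass M-test then supplies absolute and uniform convergence of $\sum c_n e_n$ to a continuous function, which must coincide with $u$ by the $L_2$-convergence already established in Proposition~\ref{L-spectrL}.

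For part~(ii) the same scheme applies after upgrading the bound on $e_n$ to its derivatives. Given a multi-index $\alpha$, pick $l$ with $2l>p/2+|\alpha|$; Proposition~\ref{prembed} then converts $\|e_n\|_{2l}\le\tilde\delta_l(\lambda_n+\gamma_0)^l$ into $\|D^\alpha e_n\|_c\le\bar\delta_{l,\alpha}(\lambda_n+\gamma_0)^l$, and the same Cauchy--Schwarz estimate (with $l$ enlarged still further so that Weyl's law tames the heavier polynomial weight) forces absolute uniform convergence of $\sum c_n D^\alpha e_n$. Since this works for every $\alpha$, the partial sums of (\ref{expan}) converge in $C^k(F)$ for every $k$, which identifies the limit with $u$ and the differentiated series with $D^\alpha u$.

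The main technical obstacle is the quantitative bookkeeping aligning three asymptotic inputs: the elliptic regularity estimate from Proposition~\ref{prellreg} (which supplies one power of $\lambda_n$ per two Sobolev orders), the Sobolev loss in Proposition~\ref{prembed} (which consumes some of these powers to reach $C^k$), and Weyl's law (\ref{weyl}) (which dictates how many remaining negative powers of $\lambda_n$ are needed to summate the tail). Once $l$ is chosen to satisfy all three constraints simultaneously, the rest of the argument is the standard Fourier-series manipulation.
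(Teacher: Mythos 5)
Your proof is correct and follows essentially the same route as the paper's: bootstrap Proposition~\ref{prellreg} and Proposition~\ref{prembed} to get $C^k$-bounds on $e_n$ polynomial in $\lambda_n$, shift powers of $\mathcal{H}+\gamma\id$ onto $u$ to gain rapid decay of the coefficients $c_n$, and close the estimate with Weyl's law (\ref{weyl}) plus the Weierstrass M-test. The only cosmetic difference is that you control the tail via Parseval and Cauchy--Schwarz, whereas the paper bounds $|c_n|\le(\lambda_n+\gamma)^{-m}\|h\|_{0}$ directly and sums $(\lambda_n+\gamma)^{-s}$ by integrating against $\mathcal{N}(\lambda)$ in a Stieltjes integral; both lead to the same constraints on the exponents.
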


\begin{proof}
(i) Since $u\in C^\infty(F)$, for any
$m\in\NN$ and $\gamma\in\RR$ the function $h=(\mathcal{H}+\gamma\id)^m u$
is continuous on $F$, hence $h\in L_2(F)$. For $\gamma>-\lambda_0$,
the operator $\mathcal{H}+\gamma\id$ is invertible and the operator
$(\mathcal{H}+\gamma\id)^{-1}$ is defined on the whole $L_2(F)$, hence $u=(\mathcal{H}+\gamma\id)^{-m}h$. Therefore,
\begin{equation*}
 (u,\,e_n)_{0}=((\mathcal{H}+\gamma\id)^{-m}h,\,e_n)_{0}=(h,\,(\mathcal{H}+\gamma\id)^{-m}e_n)_{0}
 =(\lambda_n+\gamma)^{-m}(h,\,e_n)_{0}.
\end{equation*}
Hence in view of Lemma~\ref{lesteigf}, we get for
$l>\frac{p}{4}$ the following estimate for the terms of the series (\ref{expan}):
\begin{equation*}
\|(u, e_n)_{0}\,e_n\|_c\le \delta(\lambda_n+\gamma)^{-m+l}\|h\|_{0}.
\end{equation*}
In view of (\ref{weyl}), there exists $\delta_1>0$ such that
the counting function is estimated as $\mathcal{N}(\lambda)\le\delta_1\lambda^{\frac{p}{2}}$ for any $\lambda\in\RR$.
If we take $m>\frac{p}{2}+l$, then we get, using integration by parts
in the Stilties integral:
\begin{eqnarray*}
 \sum\nolimits_{n=0}^\infty(\lambda_n+\gamma)^{-s}
 \eq\int_{-\infty}^\infty\frac{d\,\mathcal{N}(\lambda)}{(\lambda+\gamma)^s}
 =\frac{\mathcal{N}(\lambda)}{(\lambda+\gamma)^s}\big|_{\lambda_0-1}^\infty
  +s\int_{\lambda_0-1}^\infty\frac{\mathcal{N}(\lambda)\,d\lambda}{(\lambda+\gamma)^{s+1}}\\
 \eq s\int_{\lambda_0-1}^\infty\frac{\mathcal{N}(\lambda)\,d\lambda}{(\lambda+\gamma)^{s+1}}\le
  s\,\delta_1\theta\int_{\lambda_0-1}^\infty\frac{d\lambda}{(\lambda+\gamma)^{s+1-{p}/{2}}},
\end{eqnarray*}
where $s=m-l$. The last integral converges, hence the series
(\ref{expan}) converges absolutely and uniformly on $F$. Since this
series converges to $u$ in $L_2(F)$, then it converges uniformly to $u$.

(ii) Let  $k\in\NN$ and $4\,l>p+2\,k$.
By Proposition~\ref{prembed}, the space $H^{2l}(F)$ is continuously embedded into $C^k(F)$.
As in the proof of Lemma~\ref{lesteigf}, we obtain that
there exists $\delta_k>0$ such that for any integer $n\ge0$ we have $e_n\in C^k(F)$ and
\begin{equation}\label{esteigf1}
\|e_n\|_{c^k}\le\delta_k(\lambda_n+\gamma)^l.
\end{equation}
Since $k$ is arbitrary, we conclude that any eigenfunction $e_n$ of the operator $\mathcal{H}$
belongs to class $C^\infty(F)$. Similarly as in the proof of claim (i),
for $4\,l>p+2\,k$ and $m\in\NN$, using (\ref{esteigf1}), we obtain
 $\|(u, e_n)_{0}e_n\|_{c^k}\le \delta_k(\lambda_n+\gamma)^{-m+l}\|h\|_{0}$,
 where $h=(\mathcal{H}+\gamma\id)^m u$. Hence, for $|\alpha|\le k$, we obtain
\[
  \|(u, e_n)_{0}D^\alpha e_n\|_c\le\delta_k(\lambda_n+\gamma)^{-m+l}\|h\|_{0}.
\]
 Then, as in the proof of claim (i), we obtain that if $m>\frac{p}{2}+l$,
 then the series in (\ref{diff}) converges
absolutely and uniformly. Since by claim (i), the series (\ref{expan}) converges uniformly to $u$,
then by the standard argument of Analysis, the series (\ref{diff}) converges uniformly
to the derivative $D^\alpha u$.
\end{proof}

\section*{Acknowledgment}
The authors would like to thank
Prof. I.\,Vaisman for his comments in general, and
Prof. Y.\,Pincho\-ver for sending paper \cite{lp94} to authors and helpful discussion  concerning Section~\ref{R-burgers-heat}.

\end{document}